\DeclarePairedDelimiter\set{\{}{\}}
\DeclarePairedDelimiterX\Set[2]{\{}{\}}{\mspace{2mu}{#1}\;\delimsize|\;{#2}\mspace{2mu}}
\def\diag{\mathop{\rm diag}\nolimits}
\theoremstyle{plain}
\newtheorem{theorem}{Theorem}[section]
\newtheorem{lemma}[theorem]{Lemma}
\theoremstyle{definition}
\newtheorem{definition}[theorem]{Definition}
\newtheorem{assumption}[theorem]{Assumption}
\theoremstyle{remark}
\title{B\'ezier Flow: a Surface-wise Gradient Descent Method for Multi-objective Optimization}
\author{%
    Akiyoshi Sannai\\
    RIKEN AIP \\
    \texttt{akiyoshi.sannai@riken.jp}\\
    \And
    Yasunari Hikima\\
    Fujitsu Limited \\
    \texttt{hikima.yasunari@fujitsu.com}\\
    \And
    Ken Kobayashi\\
    Tokyo Institute of Technology\\
    \texttt{kobayashi.k.ar@m.titech.ac.jp}\\
    \And
    Akinori Tanaka\\
    RIKEN AIP \\
    \texttt{akinori.tanaka@riken.jp}
    \And
    Naoki Hamada\\
    KLab Inc.\\
    \texttt{hamada-n@klab.com}
}
\begin{document}

\maketitle

\begin{abstract}
In this paper, we propose a strategy to construct a multi-objective optimization algorithm from a single-objective optimization algorithm by using the B\'ezier simplex model. 
Also, we extend the stability of optimization algorithms in the sense of Probability Approximately Correct (PAC) learning and define the PAC stability. 
We prove that it leads to an upper bound on the generalization with high  probability.
Furthermore, we show that multi-objective optimization algorithms derived from a gradient descent-based single-objective optimization algorithm are PAC stable.
We conducted numerical experiments and demonstrated that our method achieved lower generalization errors than the existing multi-objective optimization algorithm.
\end{abstract}

\section{Introduction}
A multi-objective optimization problem is a problem to seek a solution which minimizes (or maximizes) multiple objective functions $f_1,\dots,f_M:X\to\mathbb{R}$ simultaneously over a domain $X\subseteq\mathbb{R}^L$:
\begin{align*}
    \mathrm{minimize}\quad &\bm{f}(\bm{x})\coloneqq(f_1(\bm{x}),\dots,f_M(\bm{x}))^\top \\
    \mathrm{subject\:to}\quad &\bm{x}\in X\subseteq \mathbb{R}^L.
\end{align*}
Each objective function can have a different optimal solution, so we need to consider the trade-off between two or more solutions. Therefore, the notion of Pareto ordering is taken into consideration which is defined by
\begin{align*}
    \bm{f}(\bm{x}) \prec \bm{f}(\bm{y})  \overset{\text{def}}{\Longleftrightarrow}
    &f_m(\bm{x}) \leq f_m(\bm{y}) \text{ for all $m=1,\dots,M,$} \\
    &\text{and } f_m(\bm{x}) < f_m(\bm{y}) \text{ for some $m=1,\dots,M$}.
\end{align*}
In multi-objective optimization, the goal is to obtain the Pareto set and Pareto front, which are respectively defined as:
\begin{align*}
    X^\star(\bm{f}) \coloneqq \qty{\bm{x}\in X \mid f(\bm{y}) \nprec f(\bm{x}) \text{ for all $\bm{y} \in X$}},~
    \bm{f}X^\star(\bm{f}) \coloneqq \qty{\bm{f}(\bm{x})\in\mathbb{R}^M\mid\bm{x}\in X^\star(\bm{f})}.
\end{align*}

The Pareto set/front usually has an infinite number of points, whereas most of the numerical methods for solving the problem give us a finite set of points as an approximation of the Pareto set/front (e.g., goal programming \citep{Miettinen1999,Eichfelder2008}, evolutionary computation \citep{Deb2001,Zhang2007,Deb2014}, homotopy methods \citep{Hillermeier2001,Harada2007}, and Bayesian optimization \citep{Hernandez-Lobato2016,Yang2019}).
Such a finite-point approximation cannot reveal the complete shape of the Pareto set and front.
In addition, the finite-point approximation suffers from the ``curse of dimensionality'' since the dimensionality of the Pareto set and front is $M-1$ in generic problems (see \citet{Wan1977,Wan1978} for rigorous statement).
Again this background, we consider in this paper an optimization algorithm to obtain a parametric hypersurface describing the Pareto set.

There is a common structure of the Pareto set/front across a wide variety of problems, which can be utilized to enhance approximation.
In many problems, obtained solutions imply the Pareto set/front is a curved $(M-1)$-simplex, e.g., airplane design \citep{Mastroddi2013}, hydrologic modeling \citep{Vrugt2003}, PI controller tuning \citep{Reynoso-Meza2015}, building design \citep{Gilan2016}, motor design \citep{Contreras2016}, and lasso's hyper-parameter tuning \citep{Hamada2020b}.
To mathematically identify such a class of problems, \citet{Kobayashi2019} defined the \emph{simplicial} problem (see \Cref{fig:face-relation}).
\citet{Hamada2020} showed that strongly convex problems are simplicial under mild conditions, which implies facility location \citep{Kuhn1967} and phenotypic divergence modeling in evolutionary biology \citep{Shoval2012} are simplicial.
\citet{Kobayashi2019} showed that the Pareto set and front of any simplicial problem can be approximated with arbitrary accuracy by a B\'ezier simplex.

\begin{figure*}[t]
    \centering
    \includegraphics[width=0.70\linewidth]{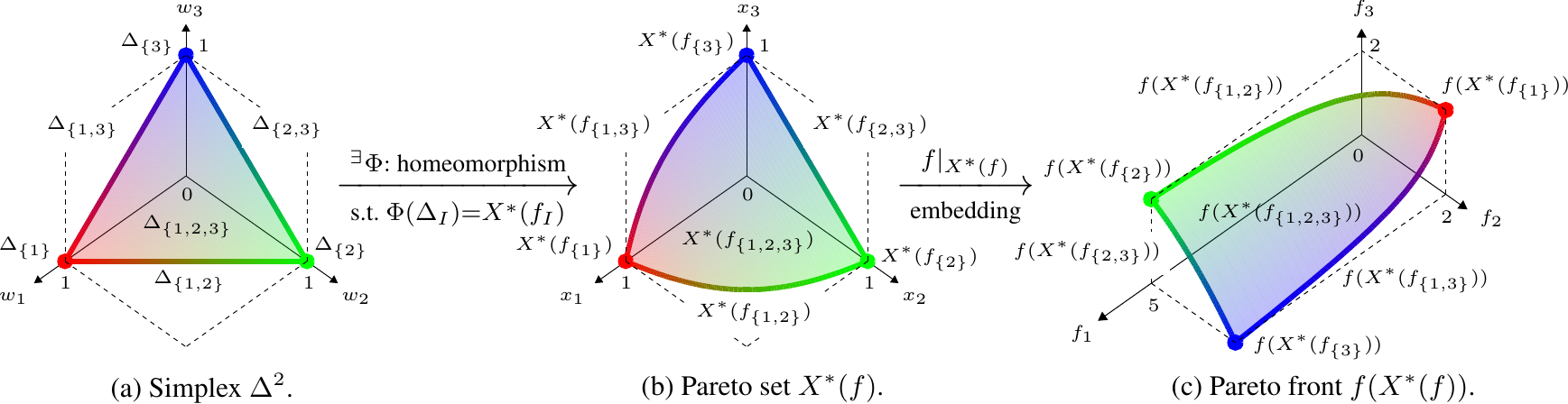}
    \caption{A simplicial problem $f = (f_1, f_2, f_3)^\top: \mathbb{R}^3 \to \mathbb{R}^3$. An $M$-objective problem $f$ is \emph{simplicial} if the following conditions are satisfied: (i) there exists a homeomorphism $\Phi: \Delta^{M - 1} \to X^*(f)$ such that $\Phi(\Delta_I) = X^*(f_I)$ for all $I \subseteq \{1, \dots, M\}$; (ii) the restriction $f|_{X^*(f)}: X^*(f) \to \mathbb{R}^M$ is a topological embedding (and thus so is $f \circ \Phi: \Delta^{M - 1} \to \mathbb{R}^M$).}\label{fig:face-relation}
\end{figure*}

By using this advantage of the B\'ezier simplex model, we propose a novel strategy to construct a multi-objective optimization algorithm from a single-objective optimization. 
With a given single objective optimization algorithm, this scheme updates the B\'ezier simplex to obtain the Pareto set.
In addition, we analyze the theoretical property of the multi-objective optimization algorithm derived from our scheme.
Specifically, we first define Probably Approximately Correct (PAC) stability as an extension of the stability of optimization algorithms and prove that the PAC stability leads to an upper bound on the generalization gap in the sense of PAC learning. 
Our contributions are summarized as follows:
\begin{enumerate}
    \item We devise a strategy to construct a multi-objective optimization algorithm from a single-objective optimization algorithm with the B\'ezier simplex. 
    Unlike most of the existing multi-objective optimization methods, the algorithm derived from our scheme has the advantage of obtaining a parametric hyper-surface that represents the Pareto set of a given simplicial, Lipschitz continuous, differentiable multi-objective optimization problem to be solved.
    \item We define PAC stability, which is an extension of the stability introduced by \citet{hardt2016train} to the PAC learning settings and show that PAC stability gives an upper bound on the generalization gap with a high probability. 
    Also, we prove that when we employ a gradient-based optimization algorithm as a single optimization algorithm, the derived multi-objective optimization algorithm is PAC stable. 
    \item We conducted numerical experiments and demonstrated that the multi-objective optimization algorithm constructed by our scheme achieved lower generalization errors than the existing multi-objective optimization algorithm. In addition, the algorithm given by our scheme can efficiently obtain the Pareto set with a small number of sample points.
\end{enumerate}
\paragraph{Related Work}
\citet{Kobayashi2019} proposed B\`ezier simplex fitting algorithms, the all-at-once fitting, and inductive skeleton fitting to describe Pareto fronts, and \citet{Tanaka2020} analyzed the asymptotic risk of the fitting algorithms. 
The two fitting algorithms focus on post-optimization processes and assume that we have an approximate solution set of the Pareto set in advance. 
Thus, these algorithms by themselves cannot solve multi-objective optimization problems. 
Recently, \citet{Maree2020} proposed a bi-objective optimization algorithm that updates the B\'ezier curve. 
However, this algorithm exploits the structure of the bi-objective optimization problem and can not be applied when the number of objective functions is more or equal to three. 
To the best of our knowledge, we are the first to propose a general framework of multi-objective optimization with the B\'ezier simplex and show its theoretical property. 
\section{Preliminaries}
\subsection{Probability simplex}\label{prob sim}
Let $[M]=\{1,\dots,M\}$ be a set of $M$ points.
We consider the set of probability distribution $\bm{t}$ over $[M]$. 
The set of probability distributions over $[M]$ is equal to the simplex
\footnotesize
\begin{align*}
\Delta^{M-1} \coloneqq
    \Set*{(t_1,\dots,t_M)^\top\in\mathbb{R}^M}{t_m\geq 0,\, \sum_{m=1}^{M} t_m = 1}.
\end{align*}
\normalsize
Let $C(X)$ be the space of continuous functions over $X$, and we define the function $F\colon [M] \to C(X)$ by $F(m)=f_m$. 
Then, we have the expectation function
\begin{align*}
\begin{array}{rccc}
\mathbb{E}(\bm{f})\colon &\Delta                      &\longrightarrow& C(X)                     \\
        & \rotatebox{90}{$\in$}&               & \rotatebox{90}{$\in$} \\
        & \bm{t}                   & \longmapsto   & \mathbb{E}_{\bm{t}}(F)
\end{array}.
\end{align*}

Furthermore, if $f_m$ is strongly convex for all $m \in [M]$, then the following function is well-defined:
\begin{align*}
\begin{array}{rccc}
\arg\min\mathbb{E}(\bm{f})\colon &\Delta                      &\longrightarrow& X                \\
        & \rotatebox{90}{$\in$}&               & \rotatebox{90}{$\in$} \\
        & \bm{t}                   & \longmapsto   & \arg\min\mathbb{E}_{\bm{t}}(F)
\end{array}.
\end{align*}

Note that $\mathbb{E}_{\bm{t}}(F)=\sum_m t_m f_m$ follows from the definition. $\mathbb{E}_{\bm{t}}(F)$ corresponds to the sum of a function chosen continuously along $\bm{t}$ from $\bm{f}$. 
As a direct consequence from Theorem 2 in \citet{Mizota2021}, the mapping $\arg\min\mathbb{E}(\bm{f})$ gives a continuous surjection onto $X^\star(\bm{f})$ if $f_m$ is strongly convex for all $m \in [M]$.

\subsection{Simplicial Problem}
A multi-objective optimization problem is characterized by its objective map $\bm{f}=(f_1,\dots,f_M)^\top\colon X\to\mathbb{R}^M$. 
We define the \emph{$J$-subsimplex} for an index set $J \subseteq [M]$ by $\Delta^{M - 1}_J \coloneqq \set{(t_1, \dots, t_M)^\top \in \Delta^{M - 1} \mid t_m = 0\ (m \not \in J)}$.
The problem class we wish to consider is a problem in which the Pareto set/front has the simplex structure. Such problem class is defined as follows.

\begin{definition}[\citet{Kobayashi2019}]
A problem $\bm{f}\colon X\rightarrow\mathbb{R}^M$ is \emph{simplicial} if there exists a map $\bm{\phi}\colon\Delta^{M-1}\rightarrow X$ such that for each non-empty subset $J\subseteq [M]$, its restriction $\bm{\phi}|_{\Delta^{(M-1)}_J}\colon\Delta^{M-1}_J\rightarrow X$ gives homeomorphisms
\begin{align*}
    \bm{\phi}|_{\Delta^{M-1}_J} \colon \Delta^{M-1}_J\to X^\star (\bm{f}_J),\quad
    \bm{f}\circ\bm{\phi}|_{\Delta^{M-1}_J} \colon \Delta^{M-1}_J\to \bm{f} X^\star (\bm{f}_J).
\end{align*}
We call such $\bm{\phi}$ and $\bm{f}\circ\bm{\phi}$ a \emph{triangulation} of the Pareto set $X^\star (\bm{f})$ and the Pareto front $\bm{f}X^\star (\bm{f})$, respectively.
\end{definition}

\subsection{B\'ezier Simplex}
Let $\mathbb{N}$ be the set of nonnegative integers and
\begin{align*}
    \mathbb{N}^M_D \coloneqq \Set*{(d_1,\dots,d_M)^\top\in\mathbb{N}^M}{\sum_{m=1}^M d_m = D}.
\end{align*}
For $\bm{t}\coloneqq(t_1,\dots,t_M)^\top\in\Delta^{M-1}$ and $\bm{d}\coloneqq(d_1,\dots,d_M)^\top\in \mathbb{N}^M_D$, we denote by $\bm{t}^{\bm{d}}$ a monomial $t_1^{d_1}\dots t_M^{d_M}$. The B\'ezier simplex of degree $D$ in $\mathbb{R}^L$ with control points $\{\bm{p}_{\bm{d}}\}_{\bm{d}\in\mathbb{N}^M_D}$ is defined as a map $\bm{b}\colon\Delta^{M-1}\to\mathbb{R}^L$:
\begin{align}\label{eq:b}
    \bm{b}(\bm{t}|\bm{P})
    \coloneqq \sum_{\bm{d}\in\mathbb{N}^M_D} \binom{D}{\bm{d}} \bm{t}^{\bm{d}}\bm{p}_{\bm{d}},
\end{align}
where $\binom{D}{\bm{d}}$ is a multinomial coefficient and $\bm{P}$ represents a matrix of vertically aligned control points:
\begin{align*}
    \bm{P} \coloneqq \qty(\bm{p}_1,\dots,\bm{p}_{|\mathbb{N}^M_D|})^\top\in\mathbb{R}^{|\mathbb{N}^M_D|\times L}.
\end{align*}

We define $\bm{z}(\bm{t})$ as
\begin{align*}
    \bm{z}(\bm{t}) \coloneqq\qty[ \binom{D}{\bm{d}_1}\bm{t}^{\bm{d}_1},\dots,
    \binom{D}{\bm{d}_{|\mathbb{N}^M_D|}}\bm{t}^{\bm{d}_{|\mathbb{N}^M_D|}}]^\top \in \mathbb{R}^{|\mathbb{N}^M_D|}.
\end{align*}
Then, \eqref{eq:b} can be represented as $\bm{b}(\bm{t}|\bm{P}) = \bm{P}^\top\bm{z}(\bm{t})$. 
It is known that B\'ezier simplex is a universal approximator of continuous functions \cite{Kobayashi2019}, and thus, the mapping $\arg\min\mathbb{E}(\bm{f})$ can be approximated by B\'ezier simplices in arbitrary precision.
From this theoretical advantage, we construct a general framework to obtain a multi-objective optimization method $\mathcal{M}(A)$ from a single-objective optimization method $A$ with B\'ezier simplices.

\section{Proposed Algorithm}
\begin{figure*}[t]
    \centering
    \subfigure[Generate solutions on the \newline B\'ezier simplex.]{
        \begin{overpic}[width=0.33\linewidth,trim=30 30 30 30,clip]{./figure/ma_step1}
            \put(85, 30){\large $\longrightarrow$}
        \end{overpic}}%
    \subfigure[Update each solution with~$A_{\bm{t}^{(k)}_n}$.]{
        \begin{overpic}[width=0.33\linewidth,trim=30 30 30 30,clip]{./figure/ma_step2}
            \put(85, 30){\large $\longrightarrow$}
        \end{overpic}}%
    \subfigure[Update the B\'ezier simplex.]{\includegraphics[ width=0.33\linewidth,trim=30 30 30 30,clip]{./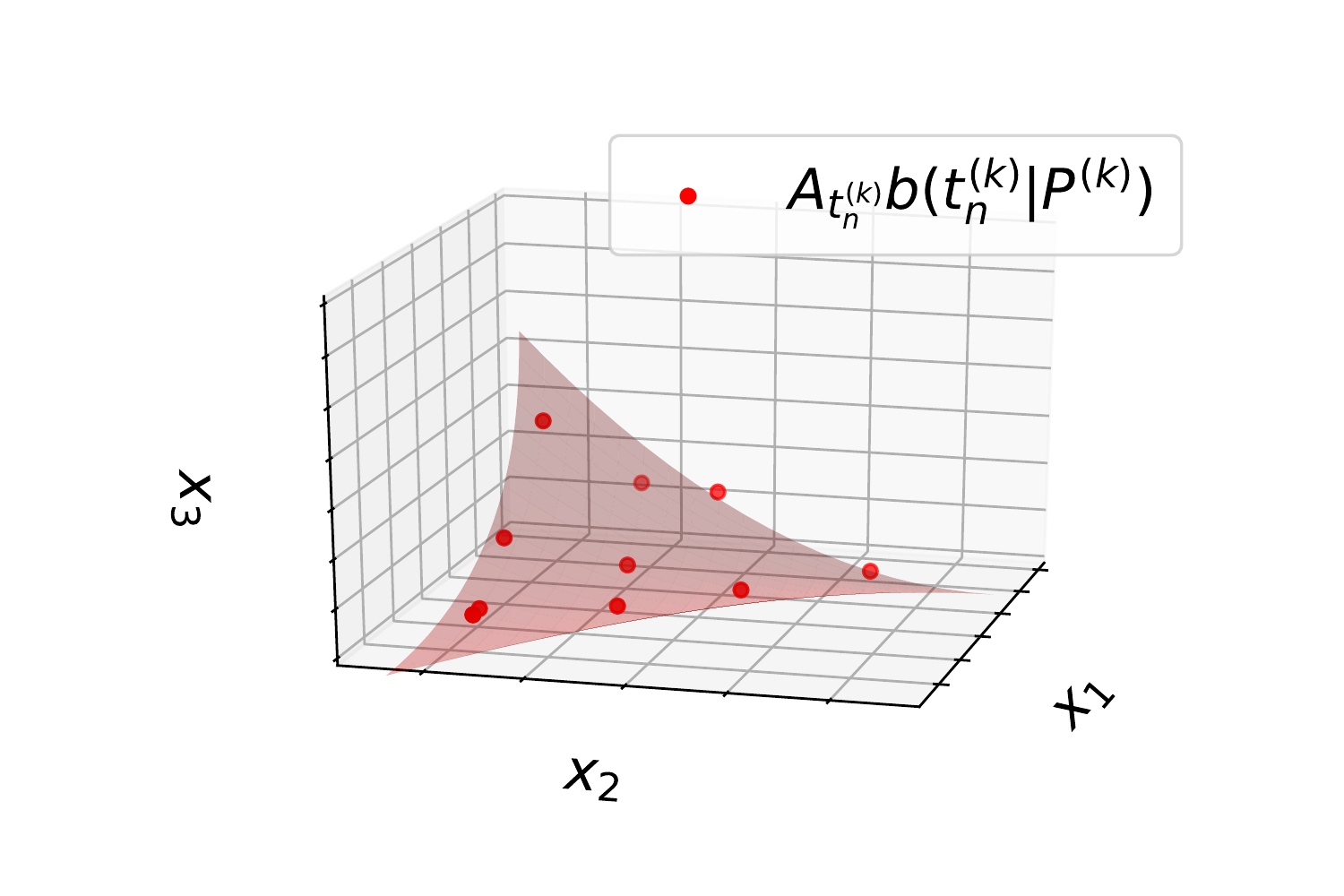}}%
    \caption{Conceptual diagram of $\mathcal{M}(A)$ at the $k$th iteration; the red surfaces in (a) and (c) represent the B\`ezier simplices.}
    \label{fig:ma}
\end{figure*}

\begin{algorithm}[ht]
    \caption{Multi-objective Optimization Method $\mathcal{M}(A)$ of the Optimization Method $A$}
    \label{alg:multi-op}
\begin{algorithmic}[1]
   \STATE Set $k\leftarrow 1$ and the initial control point $\bm{P}^{(k)}$.
   \WHILE{$k\le K$}
   \STATE Draw $\{\bm{t}^{(k)}_n\}_{n=1}^{N}$ for which each $\bm{t}^{(k)}_n$ is drawn i.i.d. from the uniform distribution on $\Delta^{M-1}$.
   \STATE Obtain $\{\bm{b}(\bm{t}^{(k)}_n|\bm{P}^{(k)})\}_{n=1}^{N}$  by (\ref{eq:b}).
   \STATE Obtain $\{A_{\bm{t}^{(k)}_n}(\bm{b}(\bm{t}^{(k)}_n|\bm{P}^{(k)}))\}_{n=1}^{N}$ from $A$.
   \STATE Update control points by (\ref{update_ma}).
   \STATE $k \leftarrow k + 1$.
   \ENDWHILE
   \RETURN $\bm{P}^{(K+1)}$.
\end{algorithmic}
\end{algorithm}

A number of methods have been studied in the context of  multi-objective optimization. Many of the methods are designed to apply to any multi-objective optimization problems; however, the individual methods are written in separate contexts and are not unified. Therefore, in this paper, we introduce a general framework to obtain a multi-objective optimization method $\mathcal{M}(A)$ from a single-objective optimization algorithm $A$. 
Moreover, in contrast to the existing methods finding a finite set that approximates Pareto set/front, we obtain a parametric hypersurface representing the Pareto set/front of multi-objective problems.

In our proposed algorithm, we obtain control points of a B\'ezier simplex that represents the Pareto set of a problem to be solved from a single-objective optimization algorithm $A$. In this paper, a single-objective optimization algorithm $A$ is a map from the direct product of the sample space and the space of loss functions $Z \times \mathcal{L}$ to the space of model parameters $\mathcal{W}$. Then, for any $\bm{t} \in \Delta^{M-1}$, we denote  by $A_{\bm{t}}$ a single-objective optimization algorithm $A$ with the loss function  $\mathbb{E}_{\bm{t}}(F)$, i.e., $A(-, \mathbb{E}_{\bm{t}}(F))$. 

Our algorithm begins by setting the initial control points $\bm{P}^{(1)}$.
At the $k$th iteration~($k\ge 1$), we randomly sample $\{\bm{t}^{(k)}_n\}^N_{n=1}$ from the uniform distribution on $\Delta^{M-1}$ and obtain data points $\{\bm{b}(\bm{t}^{(k)}_n|\bm{P}^{(k)})\}_{n=1}^N$ on the current B\'ezier simplex.
Next, we update each $\bm{b}(\bm{t}^{(k)}_n|\bm{P}^{(k)})$ by $A_{\bm{t}^{(k)}_n}$, i.e.,
\begin{align}\label{update_A}
\bm{x}^{(k)}_n=A_{\bm{t}^{(k)}_n}\qty(\bm{b}(\bm{t}^{(k)}_n|\bm{P}^{(k)})).
\end{align}
Then, we update the B\'ezier simplex with $\{(\bm{t}_n^{(k)} ,\bm{x}^{(k)}_n)\}_{n=1}^{N}$. 
Specifically, we solve the following optimization problem to fit a B\'ezier simplex to $\{(\bm{t}_n^{(k)} ,\bm{x}^{(k)}_n)\}_{n=1}^{N}$:
\begin{align}\label{opt:find_p}
    \underset{\bm{P}\in\mathbb{R}^{|\mathbb{N}^M_D|\times L}}{\mathrm{minimize}}\: &\frac{1}{N}\sum_{n=1}^{N} \left\|\bm{x}^{(k)}_n - \bm{b}(\bm{t}^{(k)
    }_n|\bm{P})\right\|^2_2,
\end{align}
where $\bm{P}$ is a variable to be optimized, and $\|\cdot\|_2$ denotes the Euclidean norm. 
Let $\bm{X}^{(k)}$ and $\bm Z^{(k)}$ be a matrix of vertically aligned $\bm{x}^{(k)}_n$ and $\bm z(\bm t^{(k)}_n)$, respectively. 
Then, the problem (\ref{opt:find_p}) is reformulated as
\begin{align}\label{opt:find_p_2}
    \underset{\bm{P}\in\mathbb{R}^{|\mathbb{N}^M_D|\times L}}{\mathrm{minimize}}\:
    \frac{1}{N}\left\| \bm{X}^{(k)} - \bm{Z}^{(k)}\bm{P} \right\|^2_F,
\end{align}
where $\|\cdot\|_F$ denotes the Frobenius norm.
Since the optimization problem (\ref{opt:find_p_2}) is an unconstrained convex quadratic optimization, and it can be shown that $\bm{Z}^{(k)\top}\bm{Z}^{(k)}$ is regular with probability 1, the update rule for control points is described as
\begin{align}\label{update_ma}
\bm{P}^{(k+1)} = \qty(\bm{Z}^{(k)\top}\bm{Z}^{(k)})^{-1} \bm{Z}^{(k)\top}\bm{X}^{(k)}.
\end{align} 
We repeat this procedure until $k$ reaches the maximum number of iterations specified by the user.
We summarize the multi-objective optimization method in \Cref{alg:multi-op} and show its conceptual diagram in \Cref{fig:ma}. 

\section{PAC Stability and Generalization Gap}\label{sec:gere}

Assume that there is an unknown distribution $\mathcal{D}$ over some space $Z$. We take  $S=\left(\bm{t}_{1}, \ldots, \bm{t}_{N}\right)$ of $N$ examples drawn i.i.d. from $\mathcal{D}$. Then the generalization error is defined by:
$$
R[\bm{P}] \stackrel{\text { def }}{=} \mathbb{E}_{\bm{t} \sim \mathcal{D}} \ell(\bm{P} ; \bm{t}),
$$
where $\ell$ is a loss function, and $\ell(\bm{P} ; \bm{t})$ denotes the loss of the model described by $\bm{P}$ with an input $\bm{t}$.
Since the generalization error cannot be measured directly, we instead consider the empirical error defined by $R_{S}(\bm{P}) \coloneqq \frac{1}{N} \sum^N_{n=1} \ell(\bm{P};\bm{t}_n)$.
Then, the generalization gap of $\bm{P}$ is defined as the difference between empirical error and generalization error, i.e.,
\begin{align}\label{gen err}
    R_{S}(\bm{P}) - R(\bm{P}).
\end{align}
We consider a potentially randomized algorithm $A$ (e.g., stochastic gradient descent) and the expectation value of~(\ref{gen err}):
\begin{align}\label{exp_algo}
 \mathbb{E}_{A}\left[R_{S}[A(S)]-R[A(S)]\right].
\end{align}

To treat the approximate behavior of the expectation value with respect to the sample, we consider the following.
First, take an event $C \subset Z^N$ that has a high probability of occurring. Then, the conditional generalization error under the condition $C$ is defined by:
$$
\hat{R}[\bm{P}] \stackrel{\text { def }}{=} \mathbb{E}_{(\bm{t}_1,...,\bm{t}_N) \sim \mathcal{D}^N_{C}}\left [ \frac{1}{N}\sum_{i=1}^N \ell(\bm{P} ; \bm{t}_i)\right],
$$
where $\mathcal{D}^N_C$ is the conditional probability distribution of $C$. Note that if $C=Z^N$,  $\hat{R}[\bm{P}] $ is equal to $R[\bm{P}] $.

Next, we consider the approximate expectation value of (\ref{exp_algo}) by
\begin{align}\label{exp_algo_}
\hat{\mathbb{E}}_{S}\mathbb{E}_{A}\left[R_{S}[A(S)]-\hat{R}[A(S)]\right],
\end{align}
where $\hat{\mathbb{E}}_{S}$ is the conditional expectation value of $C$.
This invariant allows us to discuss the expectation value of the generalization gap with respect to events.

The following introduces the definition of \emph{probably approximately correct (PAC) uniform stability}. This is a PAC-like expansion of the uniform stability in \cite{hardt2016train}.
\begin{definition}\label{PAC unif}
A randomized algorithm $A$ is \emph{PAC uniformly stable} if for any $\varepsilon\in (0,1)$, there exists $\delta>0$ and an event $D_{\varepsilon} \subset Z^{N+1}$ which occurs with probability at least $1-\varepsilon $ such that
\begin{align}\label{PAC stable}
 \sup_{\bm{t}} \mathbb{E}_A\left[ \left| \ell(A(S);\bm{t})-\ell(A(S');\bm{t}) \right| \right] < \delta,
\end{align}
where $S=\left(\bm{t}_{1}, \ldots, \bm{t}_{N}\right)$ and $S^{\prime}=\left(\bm{t}_1,...,\bm{t}_{i}^{\prime}, \ldots, \bm{t}_{N}\right)$ are samples differing in at most one example, drawn from $\mathcal{D}$,  satisfying  $\left(\bm{t}_1,..., \bm{t}_i, \bm{t}_{i}^{\prime}, \bm{t}_{i+1} \ldots, \bm{t}_{N}\right) \in D_{\varepsilon}$. Furthermore, a \emph{PAC uniformly stable} randomized algorithm $A$ is decomposable if for any $\varepsilon\in (0,1)$, there are events $B_{\varepsilon} \subset Z$ such that $D_{\varepsilon}=B_{\varepsilon}^{N+1}$.

\end{definition}
With the PAC stability, we show the following that ensures that if an algorithm is PAC uniformly stable, the difference between its generalization and empirical error is small with high probability. 
\begin{theorem}\label{th:general}
Let $A$ be a decomposable PAC uniformly stable randomized algorithm. Then,  for any $\varepsilon\in (0,1)$ and $\delta>0$ in Definition \ref{PAC unif}, there exists an event $C_{\varepsilon} \subset Z^N$ which occurs with probability at least $1-\varepsilon$ such that,
$$
\left| \hat{\mathbb{E}}_{S}\mathbb{E}_{A}\left[R_{S}[A(S)]-\hat{R}[A(S)]\right] \right| < \delta,
$$
where $\hat{\mathbb{E}}_{S}$ is the conditional expectation value of $C_{\varepsilon}$ and $\hat{R}$ is the conditional generalization error under the condition $C_{\varepsilon}$.
\end{theorem}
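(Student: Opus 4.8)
The plan is to carry out the conditional analogue of the classical stability-implies-generalization argument of \citet{hardt2016train}, where the role played by i.i.d.\ symmetry is taken over by the product structure guaranteed by decomposability. First I would fix $\varepsilon\in(0,1)$, take $B_\varepsilon\subset Z$ and $D_\varepsilon=B_\varepsilon^{N+1}$ from \Cref{PAC unif}, and set $C_\varepsilon\coloneqq B_\varepsilon^N$. Since $\Pr(D_\varepsilon)=\Pr(B_\varepsilon)^{N+1}\ge 1-\varepsilon$ gives $\Pr(B_\varepsilon)\ge(1-\varepsilon)^{1/(N+1)}$, we obtain $\Pr(C_\varepsilon)=\Pr(B_\varepsilon)^N\ge(1-\varepsilon)^{N/(N+1)}\ge 1-\varepsilon$, so $C_\varepsilon$ occurs with the required probability.

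The next step exploits decomposability to factorize the conditional law: because $C_\varepsilon=B_\varepsilon^N$ is a product event, the conditional distribution $\mathcal{D}^N_{C_\varepsilon}$ equals the $N$-fold product of the single-coordinate conditional $\mathcal{D}_{B_\varepsilon}$. In particular $\hat R[\bm P]=\mathbb{E}_{\bm t\sim\mathcal{D}_{B_\varepsilon}}[\ell(\bm P;\bm t)]$, i.e.\ the conditional generalization error collapses to a one-coordinate expectation, exactly as in the unconditioned case. Writing $R_S$ as an average over coordinates and using linearity, I would split the target quantity as
\begin{align*}
\hat{\mathbb{E}}_S\mathbb{E}_A\big[R_S[A(S)]-\hat R[A(S)]\big]
=\frac1N\sum_{i=1}^N \hat{\mathbb{E}}_S\mathbb{E}_A\big[\ell(A(S);\bm t_i)\big]
-\hat{\mathbb{E}}_S\mathbb{E}_A\,\mathbb{E}_{\bm t'\sim\mathcal{D}_{B_\varepsilon}}\big[\ell(A(S);\bm t')\big].
\end{align*}

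The heart of the proof is a relabeling argument. Introduce a fresh example $\bm t_i'\sim\mathcal{D}_{B_\varepsilon}$ and let $S^{(i)}$ be $S$ with $\bm t_i$ replaced by $\bm t_i'$. Since $\mathcal{D}^N_{C_\varepsilon}\otimes\mathcal{D}_{B_\varepsilon}=\mathcal{D}_{B_\varepsilon}^{\otimes(N+1)}$ is invariant under exchanging the $i$-th coordinate with the fresh one, I can rewrite $\hat{\mathbb{E}}_S\mathbb{E}_A\mathbb{E}_{\bm t'}[\ell(A(S);\bm t')]=\hat{\mathbb{E}}_{S,\bm t_i'}\mathbb{E}_A[\ell(A(S^{(i)});\bm t_i)]$, whence
\begin{align*}
\hat{\mathbb{E}}_S\mathbb{E}_A\big[R_S[A(S)]-\hat R[A(S)]\big]
=\frac1N\sum_{i=1}^N\hat{\mathbb{E}}_{S,\bm t_i'}\mathbb{E}_A\big[\ell(A(S);\bm t_i)-\ell(A(S^{(i)});\bm t_i)\big].
\end{align*}
Now $S\in C_\varepsilon=B_\varepsilon^N$ and $\bm t_i'\in B_\varepsilon$ force the augmented tuple $(\bm t_1,\dots,\bm t_i,\bm t_i',\bm t_{i+1},\dots,\bm t_N)$ to lie in $B_\varepsilon^{N+1}=D_\varepsilon$, so the PAC uniform stability bound~\eqref{PAC stable} applies to the pair $(S,S^{(i)})$, giving $\mathbb{E}_A[|\ell(A(S);\bm t_i)-\ell(A(S^{(i)});\bm t_i)|]<\delta$. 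Taking absolute values and the triangle inequality over the $N$ terms yields the claim.

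I expect the main obstacle to be making this relabeling step fully rigorous under conditioning: unlike the unconditioned argument, one cannot invoke i.i.d.\ exchangeability directly, because conditioning on a generic event $C$ destroys both independence and symmetry among the coordinates. The role of decomposability is precisely to restore a product (hence exchangeable) conditional law, and I would need to verify carefully that (i) the fresh coordinate $\bm t_i'$ is drawn from the \emph{same} single-coordinate conditional $\mathcal{D}_{B_\varepsilon}$ that appears in $\hat R$, and (ii) the swapped $(N+1)$-tuple genuinely lands in $D_\varepsilon$, so that inequality~\eqref{PAC stable} is legitimately invoked rather than its unconditional counterpart.
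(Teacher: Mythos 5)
Your proposal is correct and follows essentially the same route as the paper's own proof: the same choice $C_\varepsilon = B_\varepsilon^N$, the same use of decomposability to obtain a product (hence exchangeable) conditional law, the same swap/relabeling of the $i$th coordinate with a fresh one to rewrite $\hat R$ as an expectation over replaced samples, the same invocation of the stability bound on tuples in $D_\varepsilon$, and the same probability estimate $\Pr(C_\varepsilon)=\Pr(D_\varepsilon)^{N/(N+1)}\ge 1-\varepsilon$. The only cosmetic difference is that the paper draws an entire second sample $S'\sim\mathcal{C}_\varepsilon$ at once rather than one fresh coordinate per index, which is equivalent.
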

The proof of \Cref{th:general} is shown in Appendix.
\section{A surface-wise gradient descent method}
Next, we discuss the case that $A$ is a gradient descent method. 
In this case, the update rule \eqref{update_A} can be represented as
\begin{align}
    \bm{x}^{(k)}_n &= \bm{b}(\bm{t}^{(k)}_n|\bm{P}) - \alpha^{(k)}\mathrm{d}_{\bm{x}}f\qty(\bm{b}(\bm{t}^{(k)
    }_n|\bm{P})\left|\, \bm{t}^{(k)}_n \right.) = \bm{b}(\bm{t}^{(k)}_n|\bm{P}) - \alpha^{(k)}J_{\bm{f}}\qty(\bm{b}(\bm{t}^{(k)}_n|\bm{P}))^\top\bm{t}^{(k)}_n,\label{eq:update_x}
\end{align}
where $\alpha^{(k)} \in (0,1]$ is a step size at the $k$th iteration, $\mathrm{d}_{\bm{x}}$ is a first derivative with respect to $\bm{x}$, $f(\cdot|\bm{t})$ is a weighted sum of objective functions $f_1,\dots,f_M$ by $\bm{t}$, and $J_{\bm{f}}(\bm{x})$ is a matrix of vertically aligned gradient of $f_m$ at $\bm{x}$ defined by $J_{\bm{f}}(\bm{x}) \coloneqq \qty(\nabla f_1(\bm{x}),\dots, \nabla f_M(\bm{x}))^\top\in\mathbb{R}^{M\times L}$.
Let us define $\bm{B}^{(k)}$ and $\bm{G}^{(k)}$ as
\begin{align*}
    \bm{B}^{(k)} \coloneqq \bm{Z}^{(k)}\bm{P}^{(k)}, \;
   \bm{G}^{(k)} \coloneqq \mqty[%
   (\bm{t}^{(k)}_1)^\top J_{\bm{f}}(\bm{P}^{(k)\top} \bm{z}_1)\\
   \vdots\\
   (\bm{t}^{(k)}_N)^\top J_{\bm{f}}(\bm{P}^{(k)\top} \bm{z}_N)]
\end{align*}
Then, the update rule (\ref{eq:update_x}) is rewritten as
\begin{align*}
    \bm{X}^{(k)} = \bm{B}^{(k)} - \alpha^{(k)}\bm{G}^{(k)}.
\end{align*}
With this notation, the update rule for the control points \eqref{update_ma} is represented as 
\begin{align}
    \bm{P}^{(k+1)} =\bm{P}^{(k)} - \alpha^{(k)}\qty(\bm{Z}^{(k)\top}\bm{Z}^{(k)})^{-1}\bm{Z}^{(k)\top}\bm{G}^{(k)}. \label{eq:update_p}
\end{align}
We describe the surface-wise gradient descent method in \Cref{alg:proposed}. 

\begin{algorithm}[ht]
   \caption{Surface-wise Gradient Descent Method}
   \label{alg:proposed}
\begin{algorithmic}[1]
   \STATE Set $k\leftarrow 1$ and the initial control point $\bm P^{(k)}$.
   \WHILE{$k\le K$}
   \STATE Draw $\{\bm{t}^{(k)}_n\}_{n=1}^{N}$ for which each $\bm{t}^{(k)}_n$ is drawn i.i.d. from the uniform distribution on $\Delta^{M-1}$.
   \STATE Obtain $\{\bm{b}(\bm{t}^{(k)}_n|\bm{P}^{(k)})\}_{n=1}^{N}$ by (\ref{eq:b}).
   \STATE Update $\{\bm{b}(\bm{t}^{(k)}_n|\bm{P}^{(k)})\}_{n=1}^{N}$ by (\ref{eq:update_x}).
   \STATE Update control points by  (\ref{eq:update_p}).
   \STATE $k \leftarrow k + 1$.
   \ENDWHILE
   \RETURN $\bm{P}^{(K+1)}$.
\end{algorithmic}
\end{algorithm}
\section{PAC Stability of the surface-wise gradient descent method}
We prove that the surface-wise gradient descent is PAC uniformly stable. 
All omitted proofs are shown in Appendix. 
Hereinafter, we make the following mild assumption about the objective function.
\begin{assumption}\label{assumption:mu}
All the objective functions $f_1,\dots,f_M$ are $\mu$-Lipschitz continuous and differentiable on $X$.
\end{assumption}

Let $\bm{x}^\star\colon\Delta^{M-1}\to X^\star(\bm{f})$ be a map from $\Delta^{M-1}$ to the Pareto set of $\bm{f}$.
For $\bm{P}$, we define a loss function as
\begin{align}\label{def:lossfunc}
    \ell(\bm{P};\bm{t}) \coloneqq \| \bm{b}(\bm{t}|\bm{P}) - \bm{x}^\star(\bm{t}) \|_2.
\end{align}
Since $X^*(\bm{f})$ is unknown, we can not take a sample directly from $X^\star(\bm{f})$. Instead, we take a sample $\bm{t}=\{\bm{t}_n\}^N_{n=1}$ drawn i.i.d. from the uniform distribution over $\Delta^{M-1}$. 

To prove that \Cref{alg:proposed} is PAC uniformly stable, we first show two propositions in advance.
Note that the following two propositions can respectively be regarded as an extension of the concept of boundedness and expansiveness introduced in \citep{hardt2016train} to analyze the stability of an optimization algorithm. 

\begin{lemma}\label{lemma:sigma-bounded}
Let $U>0$ be a constant satisfying $\max_{\bm{t}\in\Delta^{M-1}}\|\bm{z}(\bm{t})\|_2 \leq U$.
Let $\varphi_{\bm{T}}$ be the update rule with parameter $\bm{T}=\{\bm{t}_n\}^N_{n=1}$ in \eqref{eq:update_p}.
Then there exists some $\eta > 0$, and we have the following inequality with probability at least $1-\varepsilon$:
\begin{align*}
    \left\| \varphi_{\bm{T}}(\bm{P}) - \bm{P} \right\|_F \leq \eta NU\mu.
\end{align*}
\end{lemma}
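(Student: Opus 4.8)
The plan is to work directly from the closed form of the update rule \eqref{eq:update_p}, which gives
\begin{align*}
    \varphi_{\bm{T}}(\bm{P}) - \bm{P} = -\alpha^{(k)}\qty(\bm{Z}^\top\bm{Z})^{-1}\bm{Z}^\top\bm{G},
\end{align*}
where $\bm{Z}$ is the matrix whose $n$th row is $\bm{z}(\bm{t}_n)^\top$, and $\bm{G}$ is the matrix whose $n$th row is $\bm{t}_n^\top J_{\bm{f}}(\bm{P}^\top\bm{z}(\bm{t}_n))$. I would then factor the Frobenius norm using the submultiplicative inequalities $\|AB\|_F \le \|A\|_{\mathrm{op}}\|B\|_F$ and $\|A\|_{\mathrm{op}}\le\|A\|_F$, together with $\alpha^{(k)}\in(0,1]$, to obtain
\begin{align*}
    \|\varphi_{\bm{T}}(\bm{P}) - \bm{P}\|_F \le \|(\bm{Z}^\top\bm{Z})^{-1}\|_{\mathrm{op}}\,\|\bm{Z}\|_{\mathrm{op}}\,\|\bm{G}\|_F.
\end{align*}
This isolates three factors, two of which are deterministic and one of which carries the probabilistic content.

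Next I would bound the two deterministic factors uniformly. For $\bm{Z}$, the hypothesis $\|\bm{z}(\bm{t}_n)\|_2\le U$ gives $\|\bm{Z}\|_{\mathrm{op}}\le\|\bm{Z}\|_F = (\sum_n\|\bm{z}(\bm{t}_n)\|_2^2)^{1/2}\le\sqrt{N}\,U$. For $\bm{G}$, each row equals the convex combination $\sum_{m=1}^M t_{n,m}\nabla f_m(\cdot)$, and \Cref{assumption:mu} ($\mu$-Lipschitz differentiable $f_m$) forces $\|\nabla f_m\|_2\le\mu$; since $\bm{t}_n\in\Delta^{M-1}$ satisfies $\sum_m t_{n,m}=1$ with $t_{n,m}\ge0$, the triangle inequality bounds each row by $\mu$, so $\|\bm{G}\|_F\le\sqrt{N}\,\mu$. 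Crucially, the bound on $\|\bm{G}\|_F$ holds for \emph{every} control-point matrix $\bm{P}$, since the gradient-norm bound is global; combined with the bounds above this yields $\|\varphi_{\bm{T}}(\bm{P}) - \bm{P}\|_F \le \|(\bm{Z}^\top\bm{Z})^{-1}\|_{\mathrm{op}}\,N U\mu$.

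It remains to control the single random factor $\|(\bm{Z}^\top\bm{Z})^{-1}\|_{\mathrm{op}} = 1/\lambda_{\min}(\bm{Z}^\top\bm{Z})$, and here I would avoid any quantitative eigenvalue estimate. The excerpt already records that $\bm{Z}^\top\bm{Z}$ is regular with probability $1$, so this quantity is a nonnegative random variable that is finite almost surely. Any such random variable $Y$ satisfies $\lim_{\eta\to\infty}\Pr(Y\le\eta)=1$ by continuity of measure from below; hence for the prescribed $\varepsilon$ there exists $\eta>0$ (depending on $N$, $\varepsilon$, and the uniform sampling law, but not on $\bm{P}$) with $\Pr\!\big(\|(\bm{Z}^\top\bm{Z})^{-1}\|_{\mathrm{op}}\le\eta\big)\ge 1-\varepsilon$. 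Since $\bm{Z}$ depends only on $\bm{T}$, this event depends only on $\bm{T}$, and on it the inequality $\|\varphi_{\bm{T}}(\bm{P}) - \bm{P}\|_F \le \eta N U\mu$ holds for all $\bm{P}$, completing the argument.

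The main obstacle is precisely this inverse term: $\lambda_{\min}(\bm{Z}^\top\bm{Z})$ can be arbitrarily close to zero on a small-probability event (for instance when the sampled $\bm{t}_n$ are nearly degenerate in feature space), so no deterministic bound is available. The clean resolution is to absorb the inverse into the constant $\eta$ via almost-sure invertibility plus continuity of measure, rather than proving a concentration-style lower bound on $\lambda_{\min}$. I would also verify that the statement only claims existence of \emph{some} $\eta$, so permitting $\eta$ to depend on $N$ and $\varepsilon$ is harmless for this lemma.
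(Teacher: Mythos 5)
Your proposal is correct and takes essentially the same route as the paper: the paper likewise writes $\varphi_{\bm{T}}(\bm{P})-\bm{P}=-\alpha^{(k)}(\bm{Z}^\top\bm{Z})^{-1}\bm{Z}^\top\bm{G}$, bounds $\|\bm{Z}^\top\bm{G}\|_F\le NU\mu$ deterministically via the Lipschitz gradients (its Lemma~\ref{lemma:ZG}), and controls $\|(\bm{Z}^\top\bm{Z})^{-1}\|_F$ with probability $1-\varepsilon$ by exactly your continuity-of-measure argument, grounded in the fact that $\det(\bm{Z}^\top\bm{Z})=0$ is the zero set of a polynomial and hence has measure zero (its Lemmas~\ref{lemma:lammin} and~\ref{col:ZZ-bounded}). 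Your splitting $\|(\bm{Z}^\top\bm{Z})^{-1}\|_{\mathrm{op}}\|\bm{Z}\|_{\mathrm{op}}\|\bm{G}\|_F$ versus the paper's $\|(\bm{Z}^\top\bm{Z})^{-1}\|_F\|\bm{Z}^\top\bm{G}\|_F$ is an immaterial difference, though note the almost-sure regularity you quote from the main text is itself established in the paper only by that same polynomial zero-set argument.
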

\begin{lemma}\label{lemma:growth}
Let $\eta>0$ and $U>0$ be constants as in \Cref{lemma:sigma-bounded}.
For $\bm{T}=\{\bm{t}_n\}^N_{n=1}$ and $\bm{T}'=\{\bm{t}'_n\}^N_{n=1}$ such that the difference between $\bm{T}$ and $\bm{T}'$ lies only in one example, there exists some $\zeta > 0$, and we have the following with probability at least $1-\varepsilon$:
\begin{align*}
    \left\|\varphi_{\bm{T}}(\bm{P}) - \varphi_{\bm{T}'}(\bm{P})\right\|_F \leq \mu U \qty(\eta + \zeta N).
\end{align*}
\end{lemma}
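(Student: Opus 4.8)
The plan is to expand both maps through the explicit update rule \eqref{eq:update_p}, so that $\varphi_{\bm{T}}(\bm{P})-\varphi_{\bm{T}'}(\bm{P})$ equals $-\alpha^{(k)}\bigl[(\bm{Z}^\top\bm{Z})^{-1}\bm{Z}^\top\bm{G}-((\bm{Z}')^\top\bm{Z}')^{-1}(\bm{Z}')^\top\bm{G}'\bigr]$, where unprimed and primed matrices are built from $\bm{T}$ and $\bm{T}'$. Writing $A\coloneqq(\bm{Z}^\top\bm{Z})^{-1}$ and $A'\coloneqq((\bm{Z}')^\top\bm{Z}')^{-1}$, the first step is the add-and-subtract decomposition $A\bm{Z}^\top\bm{G}-A'(\bm{Z}')^\top\bm{G}' = A(\bm{Z}^\top\bm{G}-(\bm{Z}')^\top\bm{G}') + (A-A')(\bm{Z}')^\top\bm{G}'$, which separates a \emph{single-row} contribution (the source of the $\eta$ term) from a \emph{perturbed-inverse} contribution (the source of the $\zeta N$ term).

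For the first term I would use that $\bm{T}$ and $\bm{T}'$ differ in a single index $i$. Since each row of $\bm{Z}$ (resp.\ $\bm{G}$) depends only on the corresponding $\bm{t}_n$ (resp.\ on $\bm{t}_n$ and the fixed $\bm{P}$), the matrices $\bm{Z}^\top\bm{G}=\sum_n\bm{z}_n\bm{g}_n^\top$ and $(\bm{Z}')^\top\bm{G}'$ agree in every summand except the $i$th. Hence their difference is $\bm{z}_i\bm{g}_i^\top-\bm{z}'_i(\bm{g}'_i)^\top$, whose Frobenius norm is at most $2U\mu$, using $\|\bm{z}(\cdot)\|_2\le U$ and the fact that each $\bm{g}_n=J_{\bm{f}}(\cdot)^\top\bm{t}_n$ is a convex combination of the gradients $\nabla f_m$, each bounded by $\mu$ under \Cref{assumption:mu}. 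Bounding $\|A\|_{\mathrm{op}}$ by the high-probability constant $\eta$ from \Cref{lemma:sigma-bounded} then yields a contribution of order $\mu U\eta$.

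The heart of the argument is the second term, where I would control the perturbation of the inverse Gram matrix through the resolvent identity $A-A'=A\bigl((\bm{Z}')^\top\bm{Z}'-\bm{Z}^\top\bm{Z}\bigr)A'$. Again using that only row $i$ changes, $\bm{Z}^\top\bm{Z}-(\bm{Z}')^\top\bm{Z}'=\bm{z}_i\bm{z}_i^\top-\bm{z}'_i(\bm{z}'_i)^\top$ has operator norm at most $2U^2$, so $\|A-A'\|_{\mathrm{op}}\le 2U^2\,\|A\|_{\mathrm{op}}\|A'\|_{\mathrm{op}}$. Combining this with $\|(\bm{Z}')^\top\bm{G}'\|_F\le NU\mu$ (the same bound already used in \Cref{lemma:sigma-bounded}) produces a contribution of order $\mu U\cdot\bigl(U^2\|A\|_{\mathrm{op}}\|A'\|_{\mathrm{op}}\bigr)\,N$, which becomes the $\zeta N$ term once $\zeta$ is set to absorb the constant together with the (bounded) factor $U^2\|A\|_{\mathrm{op}}\|A'\|_{\mathrm{op}}$.

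Finally, to make the bounds $\|A\|_{\mathrm{op}},\|A'\|_{\mathrm{op}}\le\eta$ rigorous I would invoke \Cref{lemma:sigma-bounded} for both $\bm{T}$ and $\bm{T}'$ on the common $(N+1)$-tuple and intersect the two high-probability events (e.g.\ applying the lemma at level $\varepsilon/2$ each), which still holds with probability at least $1-\varepsilon$. Collecting the two contributions and using $\alpha^{(k)}\le 1$ gives $\|\varphi_{\bm{T}}(\bm{P})-\varphi_{\bm{T}'}(\bm{P})\|_F\le\mu U(\eta+\zeta N)$ after redefining $\eta,\zeta$ to absorb the universal constants. I expect the main obstacle to be exactly this probabilistic control of the smallest singular value of $\bm{Z}$: the whole estimate hinges on $\|(\bm{Z}^\top\bm{Z})^{-1}\|_{\mathrm{op}}$ being uniformly bounded, and one must ensure this conditioning holds \emph{simultaneously} for $\bm{T}$ and $\bm{T}'$ on a single event of probability at least $1-\varepsilon$, since $\bm{Z}^\top\bm{Z}$ is random and nearly-degenerate sample configurations make the inverse blow up.
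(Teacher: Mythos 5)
Your proposal is correct, but it takes a genuinely different route from the paper. The paper does not compare $\varphi_{\bm{T}}$ and $\varphi_{\bm{T}'}$ directly: it introduces the augmented $(N+1)$-point sample $\tilde{\bm{T}}=\{\bm{t}_1,\dots,\bm{t}_{N-1},\bm{t}_N,\bm{t}'_N\}$, applies the Sherman--Morrison formula to write $(\tilde{\bm{Z}}^\top\tilde{\bm{Z}})^{-1}$ as $(\bm{Z}^\top\bm{Z})^{-1}$ plus a rank-one correction, bounds that correction by a Rayleigh-quotient argument, $\bigl\|\bm{y}\bm{y}^\top/(1+\bm{y}^\top\bm{A}\bm{y})\bigr\|_F\leq 1/\lambda_{\min}(\bm{A})<\zeta$ with $\bm{A}=\bm{Z}^\top\bm{Z}$, $\bm{y}=\bm{A}^{-1}\bm{z}_{N+1}$, and then passes through $\varphi_{\tilde{\bm{T}}}$ by the triangle inequality. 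You instead use the add-and-subtract decomposition $A(M-M')+(A-A')M'$ together with the resolvent identity $A-A'=A\qty((\bm{Z}')^\top\bm{Z}'-\bm{Z}^\top\bm{Z})A'$ and the rank-two single-row difference; this is more elementary (no matrix-inversion lemma), symmetric in $\bm{T},\bm{T}'$, and notably more careful on the probabilistic side, since you explicitly intersect the two conditioning events at level $\varepsilon/2$ --- a point the paper's proof glosses over (the $(N+1)$-point construction there dovetails with the event $D_\varepsilon\subset Z^{N+1}$ of \Cref{PAC unif}, but the simultaneous control of both inverse Gram matrices is left implicit). The price of your route is quantitative: your $\zeta$ absorbs $U^2\eta^2$, i.e.\ a product of two inverse-Gram bounds, whereas the paper's $\zeta$ is of the order of a single $1/\lambda_{\min}$ from \Cref{lemma:lammin}; since $\zeta$ is only required to be \emph{some} constant, this is immaterial. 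One blemish you share with the paper: your bound carries a factor $2$ on the $\eta$ term ($2\mu U\eta$, from the two single-row summands) that cannot honestly be absorbed because $\eta$ is fixed by \Cref{lemma:sigma-bounded}; but the paper's own proof likewise concludes with $2\mu U\qty(\eta+\zeta N)$ against the stated $\mu U\qty(\eta+\zeta N)$, so this is a defect of the lemma's statement rather than of your argument.
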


Let $\{\bm{T}_i\}^{K}_{i=1}$ and $\{\bm{T}'_i\}^{K}_{i=1}$ be parameters whose difference lies only in the $k$th element, and $\bm P^{(K+1)}$ and $\bm P'^{(K+1)}$ be respectively the output of \Cref{alg:proposed} with $\{\bm{T}_i\}^{K}_{i=1}$ and $\{\bm{T}'_i\}^{K}_{i=1}$.
From \Cref{lemma:sigma-bounded,,lemma:growth}, we show that $\|\bm P^{(K+1)}-\bm P'^{(K+1)}\|_F$ is bounded above with arbitrary probability.
\begin{lemma}\label{lemma:P-gap}
Let $U>0,\,\eta>0$ and $\zeta>0$ be constants as in \Cref{lemma:growth}.
Suppose that we run \Cref{alg:proposed} for $K$ iterations with parameters $\{\bm{T}_i\}^{K}_{i=1}$ and $\{\bm{T}'_i\}^{K}_{i=1}$ whose difference lies only in the $k$th element. Then, we have the following with probability at least $1-\varepsilon$:
\begin{align*}
    \left\|\bm{P}^{(K+1)}-\bm{P}'^{(K+1)}\right\|_F \leq 2 \mu\eta U \qty{1 + \qty(K-k + \frac{\zeta}{\eta})N}.
\end{align*}
\end{lemma}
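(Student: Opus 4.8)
The plan is to track the divergence of the two control-point trajectories iteration by iteration, exploiting the fact that the two parameter sequences agree everywhere except at index $k$. First I would argue that the iterates coincide up to step $k$: since both runs share the initial control point $\bm{P}^{(1)}$ and the update maps satisfy $\varphi_{\bm{T}_i}=\varphi_{\bm{T}'_i}$ for every $i<k$, an immediate induction gives $\bm{P}^{(i)}=\bm{P}'^{(i)}$ for all $i\le k$. Hence the trajectories are identical until the $k$th update, and the entire discrepancy is created there. Isolating that single perturbed step, at iteration $k$ the two runs apply $\varphi_{\bm{T}_k}$ and $\varphi_{\bm{T}'_k}$ to the common point $\bm{P}^{(k)}=\bm{P}'^{(k)}$, where $\bm{T}_k$ and $\bm{T}'_k$ differ in one example. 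This is exactly the situation covered by \Cref{lemma:growth}, so on its high-probability event
$$\bigl\|\bm{P}^{(k+1)}-\bm{P}'^{(k+1)}\bigr\|_F \le \mu U(\eta+\zeta N).$$

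The core of the argument is the propagation for $i=k+1,\dots,K$, where the two runs now share the parameter $\bm{T}_i$ (hence the same $\bm{Z}^{(i)}$) but start from different control points. Subtracting the two instances of \eqref{eq:update_p} gives
$$\bm{P}^{(i+1)}-\bm{P}'^{(i+1)} = \bigl(\bm{P}^{(i)}-\bm{P}'^{(i)}\bigr) - \alpha^{(i)}\bigl(\bm{Z}^{(i)\top}\bm{Z}^{(i)}\bigr)^{-1}\bm{Z}^{(i)\top}\bigl(\bm{G}^{(i)}-\bm{G}'^{(i)}\bigr),$$
where $\bm{G}^{(i)}$ and $\bm{G}'^{(i)}$ are the gradient matrices evaluated along the two trajectories. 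The key observation is that I do not need smoothness of the $f_m$: each row of $\bm{G}^{(i)}-\bm{G}'^{(i)}$ is a convex combination $\sum_m t_{n,m}\bigl(\nabla f_m(\bm{P}^{(i)\top}\bm{z}_n)-\nabla f_m(\bm{P}'^{(i)\top}\bm{z}_n)\bigr)$, and $\mu$-Lipschitz continuity bounds each gradient by $\mu$, hence each such row by $2\mu$. Reusing the estimate behind \Cref{lemma:sigma-bounded} with $2\mu$ in place of $\mu$, the triangle inequality yields the affine recursion
$$\bigl\|\bm{P}^{(i+1)}-\bm{P}'^{(i+1)}\bigr\|_F \le \bigl\|\bm{P}^{(i)}-\bm{P}'^{(i)}\bigr\|_F + 2\eta N U\mu.$$
Because the gradient difference is controlled by a constant rather than by $\|\bm{P}^{(i)}-\bm{P}'^{(i)}\|_F$, the multiplier on the running gap is exactly one, so iterating over the $K-k$ remaining steps produces a bound growing linearly, not exponentially, in $K-k$.

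Finally I would telescope: combining the single-step jump with the $K-k$ additive increments gives
$$\bigl\|\bm{P}^{(K+1)}-\bm{P}'^{(K+1)}\bigr\|_F \le \mu U(\eta+\zeta N) + (K-k)\,2\eta N U\mu,$$
and a routine rearrangement (bounding $\mu\eta U\le 2\mu\eta U$ and $\mu\zeta UN\le 2\mu\zeta UN$) delivers the stated $2\mu\eta U\{1+(K-k+\zeta/\eta)N\}$. For the probability statement I would work throughout on the intersection of the good events supplied by \Cref{lemma:sigma-bounded,lemma:growth}, treating this as a single event on which the conditioning of every Gram matrix $\bm{Z}^{(i)\top}\bm{Z}^{(i)}$ is controlled and enlarging $\eta,\zeta$ if necessary to absorb a union bound over the iterations, so that the total failure probability stays at most $\varepsilon$. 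I expect the main obstacle to be precisely this bookkeeping: confirming that the additive per-step constant is genuinely independent of the running gap—so that no hidden expansiveness factor enters and the growth remains linear—and verifying that the per-iteration high-probability events can be merged without degrading the confidence level below $1-\varepsilon$.
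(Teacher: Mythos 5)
Your proposal is correct and follows essentially the same route as the paper's proof: the trajectories coincide up to step $k$, \Cref{lemma:growth} controls the single perturbed step, and an additive recursion with per-step increment $2\eta NU\mu$ is telescoped over the remaining $K-k$ iterations. The only (cosmetic) difference lies in how that increment is derived---you subtract the two updates and bound the rows of $\bm{G}^{(i)}-\bm{G}'^{(i)}$ by $2\mu$ via Lipschitz continuity, whereas the paper applies the boundedness estimate of \Cref{lemma:sigma-bounded} to each trajectory separately and uses the triangle inequality---but both yield exactly the same constant, so the conclusions agree.
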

Now, we are ready to show that \Cref{alg:proposed} is PAC uniformly stable.
\begin{theorem}\label{prop:stability}
Assume that $\alpha^{(k)} \in (0,1]$ for all $k\in [K]$.
Then, \Cref{alg:proposed} is PAC uniformly stable.
\end{theorem}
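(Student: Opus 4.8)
The plan is to verify \Cref{PAC unif} directly, using \Cref{lemma:P-gap} as the workhorse. Fix $\varepsilon\in(0,1)$. The quantity to control is $\sup_{\bm{t}}\mathbb{E}_A\left[\,|\ell(A(S);\bm{t})-\ell(A(S');\bm{t})|\,\right]$, where $A(S)=\bm{P}^{(K+1)}$ and $A(S')=\bm{P}'^{(K+1)}$ are the control points returned by \Cref{alg:proposed} on two batch-sequences $\{\bm{T}_i\}_{i=1}^K$ and $\{\bm{T}'_i\}_{i=1}^K$ differing in a single example. The first step is purely analytic: I would bound the difference of losses by the difference of the two B\'ezier evaluations. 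Since $\ell(\bm{P};\bm{t})=\|\bm{b}(\bm{t}|\bm{P})-\bm{x}^\star(\bm{t})\|_2$, the reverse triangle inequality $|\,\|u\|_2-\|v\|_2\,|\le\|u-v\|_2$ makes the common target $\bm{x}^\star(\bm{t})$ cancel, giving $|\ell(\bm{P};\bm{t})-\ell(\bm{P}';\bm{t})|\le\|\bm{b}(\bm{t}|\bm{P})-\bm{b}(\bm{t}|\bm{P}')\|_2$.

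The second step converts this into a bound on the parameter gap. Using $\bm{b}(\bm{t}|\bm{P})=\bm{P}^\top\bm{z}(\bm{t})$, I would write the right-hand side as $\|(\bm{P}-\bm{P}')^\top\bm{z}(\bm{t})\|_2$ and bound it by $\|\bm{P}-\bm{P}'\|_F\,\|\bm{z}(\bm{t})\|_2$ (operator norm $\le$ Frobenius norm). Crucially, $\|\bm{z}(\bm{t})\|_2\le U$ uniformly in $\bm{t}$ by the definition of $U$ in \Cref{lemma:sigma-bounded}, so the resulting bound $U\|\bm{P}-\bm{P}'\|_F$ is \emph{independent} of $\bm{t}$. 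Hence the supremum over $\bm{t}$ is absorbed for free, and $\sup_{\bm{t}}|\ell(\bm{P}^{(K+1)};\bm{t})-\ell(\bm{P}'^{(K+1)};\bm{t})|\le U\|\bm{P}^{(K+1)}-\bm{P}'^{(K+1)}\|_F$.

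The third step invokes \Cref{lemma:P-gap}: under $\alpha^{(k)}\in(0,1]$, with probability at least $1-\varepsilon$ one has $\|\bm{P}^{(K+1)}-\bm{P}'^{(K+1)}\|_F\le 2\mu\eta U\{1+(K-k+\zeta/\eta)N\}$. I would take $D_{\varepsilon}$ to be this high-probability event (equivalently, the event on which the conditioning of \Cref{lemma:sigma-bounded,lemma:growth,lemma:P-gap} all hold). On $D_{\varepsilon}$ the batch draws are fixed, so the output of the deterministic gradient update is determined and $\mathbb{E}_A$ contributes nothing; combining with the previous display yields $\sup_{\bm{t}}\mathbb{E}_A[|\ell(A(S);\bm{t})-\ell(A(S');\bm{t})|]\le 2\mu\eta U^2\{1+(K-k+\zeta/\eta)N\}$. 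Choosing any $\delta$ strictly larger than this constant then gives the strict inequality \eqref{PAC stable} required by \Cref{PAC unif}, establishing PAC uniform stability.

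The main obstacle I anticipate is bookkeeping of the probabilistic structure rather than the inequalities themselves: one must ensure that a single event $D_{\varepsilon}$ of probability at least $1-\varepsilon$ simultaneously carries all the conditioning needed by the three preceding lemmas (regularity of $\bm{Z}^{(k)\top}\bm{Z}^{(k)}$ together with the per-iteration norm bounds), and that conditioning on $D_{\varepsilon}$ legitimately removes the algorithmic randomness so that $\mathbb{E}_A$ is benign rather than taken over outcomes where the gap is uncontrolled. If one additionally wants the \emph{decomposable} property needed to apply \Cref{th:general}, the remaining task is to realize $D_{\varepsilon}$ as a product $B_{\varepsilon}^{N+1}$, which should follow from the i.i.d.\ uniform draws on $\Delta^{M-1}$ once the defining constraint is shown to act coordinate-wise.
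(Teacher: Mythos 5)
Your proposal is correct and follows essentially the same route as the paper's proof: reverse triangle inequality to cancel $\bm{x}^\star(\bm{t})$, the factorization $\bm{b}(\bm{t}|\bm{P})=\bm{P}^\top\bm{z}(\bm{t})$ with the bound $\|\bm{z}(\bm{t})\|_2\le U$ to make the estimate uniform in $\bm{t}$, and then \Cref{lemma:P-gap} to control $\|\bm{P}^{(K+1)}-\bm{P}'^{(K+1)}\|_F$ with probability at least $1-\varepsilon$. Your additional bookkeeping (explicitly naming $D_\varepsilon$, the constant $\delta$, and flagging decomposability as a separate issue) only makes explicit what the paper leaves implicit.
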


From \Cref{th:general,prop:stability}, we obtain an upper bound of the generalization gap of \Cref{alg:proposed} if \Cref{alg:proposed} is decomposable.

\section{Numerical Experiments}\label{sec:numer}

To verify that the Pareto set can be accurately approximated by a B\'ezier simplex obtained by our proposed method~(\Cref{alg:proposed}), we applied the proposed method to three multi-objective problems, which are known to be simplicial.
Notice that the formulation of skew-$M$MMD includes some important problems, such as the group Lasso in sparse modeling \cite{Yuan2006}.
The definition of each problem instance is shown in \Cref{appendix:problem}.
In \Cref{alg:proposed}, we set the degree of B\'ezier simplex as $D=3$, the initial control points as $\bm{P}^{(1)}=\bm O$, which is the zero matrix. 
Also, we set $K=1000$ and $\alpha^{(k)}=\frac{1}{k}$. 
The number of points to be sampled from a simplex in each iteration was tested for $N\in \{30, 50, 100\}$.

As a baseline, we used NSGA-II with the B\'ezier simplex fitting~\citep{Kobayashi2019}. 
Specifically, we obtain approximated Pareto solution samples by NSGA-II \cite{deb2000fast} implemented in jMetal \cite{benitez2019jmetalpy}, which is provided under MIT license, with default parameters except for population size. 
Then, we fit a B\'ezier simplex of degree $D=3$ to the approximated Pareto solutions by the all-at-once method proposed in \citet{Kobayashi2019}. 
We set the number of population size as $p\in \{30, 50,100\}$.
We implemented these algorithms in Python 3.8.10, and the experiments were performed on a Windows 10 PC with an Intel(R) Xeon(R) W-1270 CPU \SI{3.40}{GHz} and \SI{64}{GB} RAM.

\subsection{MSEs comparison}
First, we picked up a simplicial problem instance whose $\bm x^\star$ is analytically obtained and evaluated how accurate an obtained B\'ezier simplex approximates the Pareto set.
In this experiment, we used scaled-MED, which is the three-objective problem with three variables and is known to be simplicial.
The problem definition is shown in Appendix~\ref{appendix:problem}.
To evaluate the approximation accuracy of the estimated B\'ezier simplex, we used the mean squared error (MSE) defined by $\mathrm{MSE} \coloneqq \frac{1}{N} \sum^{N}_{n=1} \| \bm{b}(\hat{\bm{t}}_n|\bm{P}) - \bm{x}^\star(\hat{\bm{t}}_n)\|^2_2$, where $\bm{x}^\star$ is a map from a weight to the minimizer of the corresponding scalarizing function. The map $\bm{x}^\star$ for scaled-MED is shown in Appendix \ref{appendix:scaledmed_opt}.
To calculate MSE, we randomly sample  $\{\hat{\bm{t}}_n\}^{10000}_{n=1}$ i.i.d. from the uniform distribution on $\Delta^{2}$.
We repeated the experiments 20 times with different parameters and computed the average and the standard deviations of MSEs.

Table \ref{table:MSE} shows the average and the standard deviation of the MSEs with $N\in\{30,50,100\}$ for \Cref{alg:proposed} and $p\in\{30,50,100\}$ for NSGA-II. 
In Table \ref{table:MSE}, we highlighted the best score of MSE out of the proposed and baseline method where the difference is significant with the significance level $\mathrm{p}=0.01$ by the Wilcoxon rank-sum test. 
\Cref{table:MSE} shows that the B\'ezier simplex obtained by our proposed method can represent Pareto set well. Also, the MSEs of our method decrease with larger $N$. This result supports the PAC uniform stability of \Cref{alg:proposed}.
\Cref{fig:proposed_bezier} shows the B\'ezier simplex obtained by our proposed method, and \Cref{fig:existing_bezier} shows the B\'ezier simplex obtained by the all-at-once with NSGA-II. The true Pareto set of scaled-MED for our setting is known to be a curved triangle that can be triangulated into three vertices. In \Cref{fig:proposed_bezier}, the B\'ezier simplex obtained by our method approximates the Pareto set well even with $N=30$.

\begin{table*}[ht]
\renewcommand{\arraystretch}{0.8}
\caption{MSE (avg.$\pm$s.d. over 20 trials) for scaled-MED.}
\label{table:MSE}
\centering
\begin{tabular}{ccc|cccccccc}
\toprule
Problem & \multicolumn{2}{c|}{Proposed} & \multicolumn{2}{c}{NSGA-II + all-at-once} \\
\midrule
scaled-MED & $N=30$ & \textbf{5.51e-05$\pm$2.40e-06} & $p=30$ & 1.51e-01$\pm$1.56e-03 \\
 & $N=50$ & \textbf{4.43e-05$\pm$1.88e-06} & $p=50$ & 8.23e-02$\pm$6.80e-04 \\
 & $N=100$ & \textbf{3.78e-05$\pm$1.30e-06} & $p=100$ & 1.25e-01$\pm$1.32e-03 \\
\bottomrule
\end{tabular}
\end{table*}
\begin{table*}[ht]
\caption{GD and IGD (avg.$\pm$s.d. over 20 trials) for skew-3MED and skew-3MMD.}
\renewcommand{\arraystretch}{0.8}
\label{table:GDIGD}
\centering
\begin{tabular}{cccc|cccccc}
\toprule
Problem &  & \multicolumn{2}{c|}{Proposed} & \multicolumn{2}{c}{NSGA-II + all-at-once} \\
\midrule
skew-3MED & GD & $N=30$ & \textbf{6.22e-02$\pm$1.46e-03} & $p=30$ & 1.76e-01$\pm$3.67e-03 \\
 & & $N=50$ & \textbf{5.50e-02$\pm$4.57e-04} & $p=50$ & 1.22e-01$\pm$2.00e-03 \\
 & & $N=100$ & \textbf{5.38e-02$\pm$7.90e-04} & $p=100$ & 6.33e-02$\pm$7.90e-04 \\
 & IGD & $N=30$ & \textbf{9.50e-02$\pm$3.68e-04} & $p=30$ & 1.38e-01$\pm$8.63e-04 \\
 & & $N=50$ & \textbf{8.84e-02$\pm$3.29e-04} & $p=50$ & 1.33e-01$\pm$8.92e-04 \\
 & & $N=100$ & \textbf{8.45e-02$\pm$3.42e-04} & $p=100$ & 8.86e-02$\pm$6.02e-04 \\
\midrule 
skew-3MMD & GD & $N=30$ & \textbf{5.45e-02$\pm$1.32e-03} & $p=30$ & 2.04e-01$\pm$6.07e-03 \\
 & & $N=50$ & \textbf{5.16e-02$\pm$9.85e-04} & $p=50$ & 9.34e-01$\pm$2.15e-03 \\
 & & $N=100$ & \textbf{5.06e-02$\pm$1.07e-03}  & $p=100$ & 6.60e-02$\pm$1.07e-03 \\
 & IGD & $N=30$ & \textbf{6.40e-02$\pm$4.69e-04} & $p=30$ & 1.00e-01$\pm$1.45e-03 \\
 & & $N=50$ & \textbf{6.38e-02$\pm$4.27e-04} & $p=50$ & 7.64e-02$\pm$9.55e-04 \\
 & & $N=100$ & \textbf{6.54e-02$\pm$4.81e-04} & $p=100$ & 7.23e-02$\pm$8.46e-04 \\
\bottomrule
\end{tabular}
\end{table*}

\subsection{GDs and IGDs comparison}\label{subsec:gd}
Next, we validate the practicality of the proposed method in more practical settings. 
In this experiment, we used two simplicial problem instances: skew-$3$MED and skew-$3$MMD, whose $\bm x^\star$  cannot be represented in a closed-form.
We show their definitions in Appendix \ref{appendix:problem}. 
We used the generational distance (GD) \cite{Veldhuizen99} and the inverted generational distance (IGD) \cite{Zitzler03} to evaluate how accurately the estimated B\'ezier simplex approximates the Pareto set.
GD and IGD are defined by:
\begin{align*}
    \mathrm{GD}(X,Y)\coloneqq\frac{1}{|X|}\sum_{\bm{x}\in X}\min_{\bm{y}\in Y} \|\bm{x}-\bm{y}\|_2, \quad\mathrm{IGD}(X,Y)\coloneqq\frac{1}{|Y|}\sum_{\bm{y
    }\in Y}\min_{\bm{x}\in X} \|\bm{x}-\bm{y}\|_2,
\end{align*}
where $X$ is a finite set whose elements are sampled from an estimated hyper-surface and $Y$ is a validation set. 
We can say that the obtained B\'ezier simplex is close to the Pareto set if and only if both GD and IGD are small.
As a validation set $Y$, we generated approximate Pareto solutions by NSGA-II with the population size of $1000$.  
To construct $X$, we randomly sample $\{\hat{\bm{t}}_n\}^{1000}_{n=1}$ i.i.d. from the uniform distribution on $\Delta^2$ and obtain sample points on the estimated B\'ezier simplex. 
We repeated the experiments 20 times with different parameters and computed the average and the standard deviations of their GDs and IGDs.

Table \ref{table:GDIGD} shows the average and the standard deviation of the GDs and IGDs when $N\in\{30, 50, 100\}$ and $p\in\{30,50,100\}$. 
In Table \ref{table:GDIGD}, we highlighted the best score of GD and IGD  where
the difference is at a significant with significance level $\mathrm{p} = 0.01$ by the Wilcoxon rank-sum test.
Table \ref{table:GDIGD} shows that the proposed method achieved better GD and IGD for skew-3MED and skew-3MMD.
The differences are pronounced in the results of small sample/population size, which implies our method obtains a B\'ezier simplex approximating Pareto set well.
\begin{figure*}[t]
    \centering
    \subfigure[$N=30$]{\includegraphics[width=0.3\linewidth]{./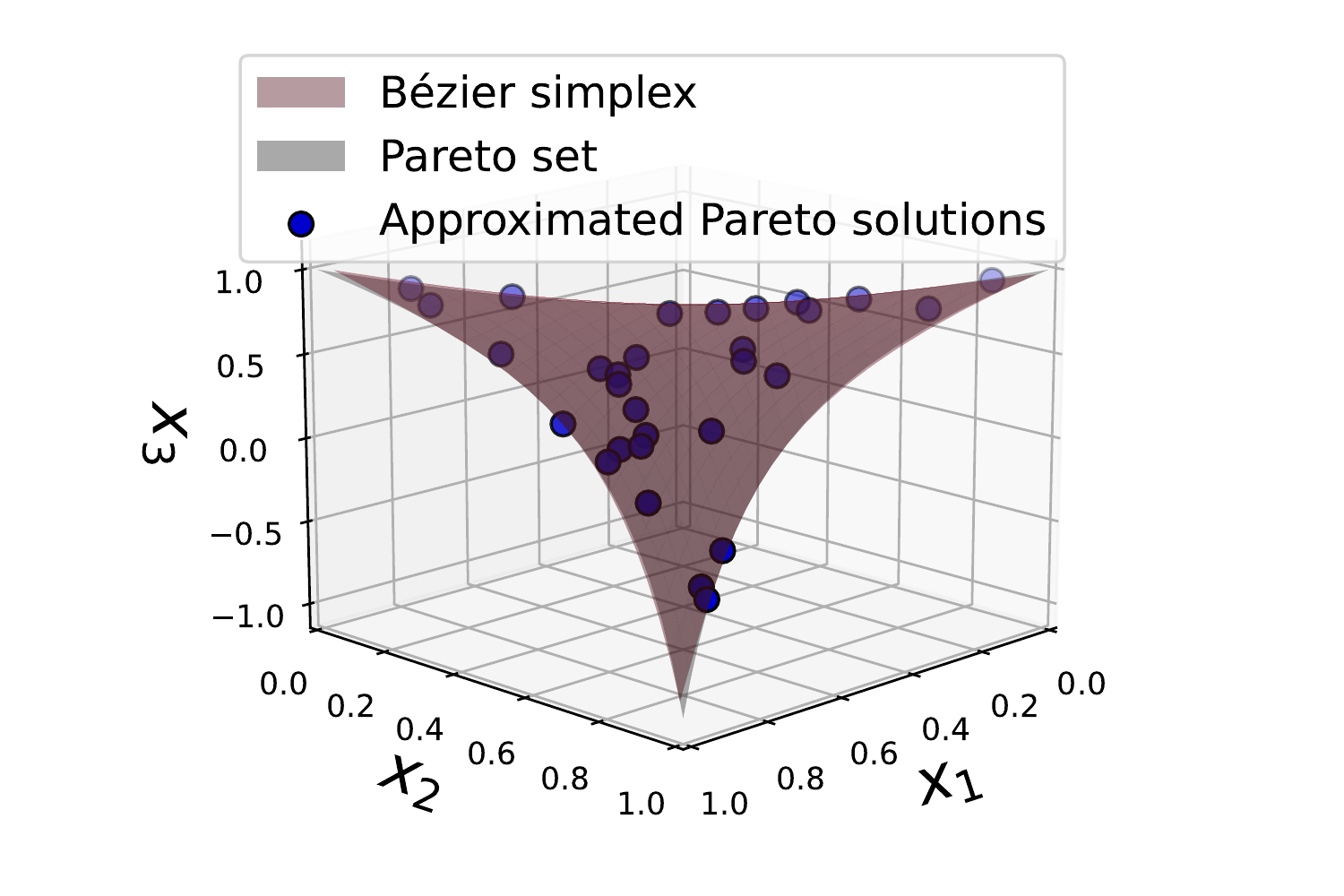}\label{fig:proposed_n30}}%
    \subfigure[$N=50$]{\includegraphics[ width=0.3\linewidth]{./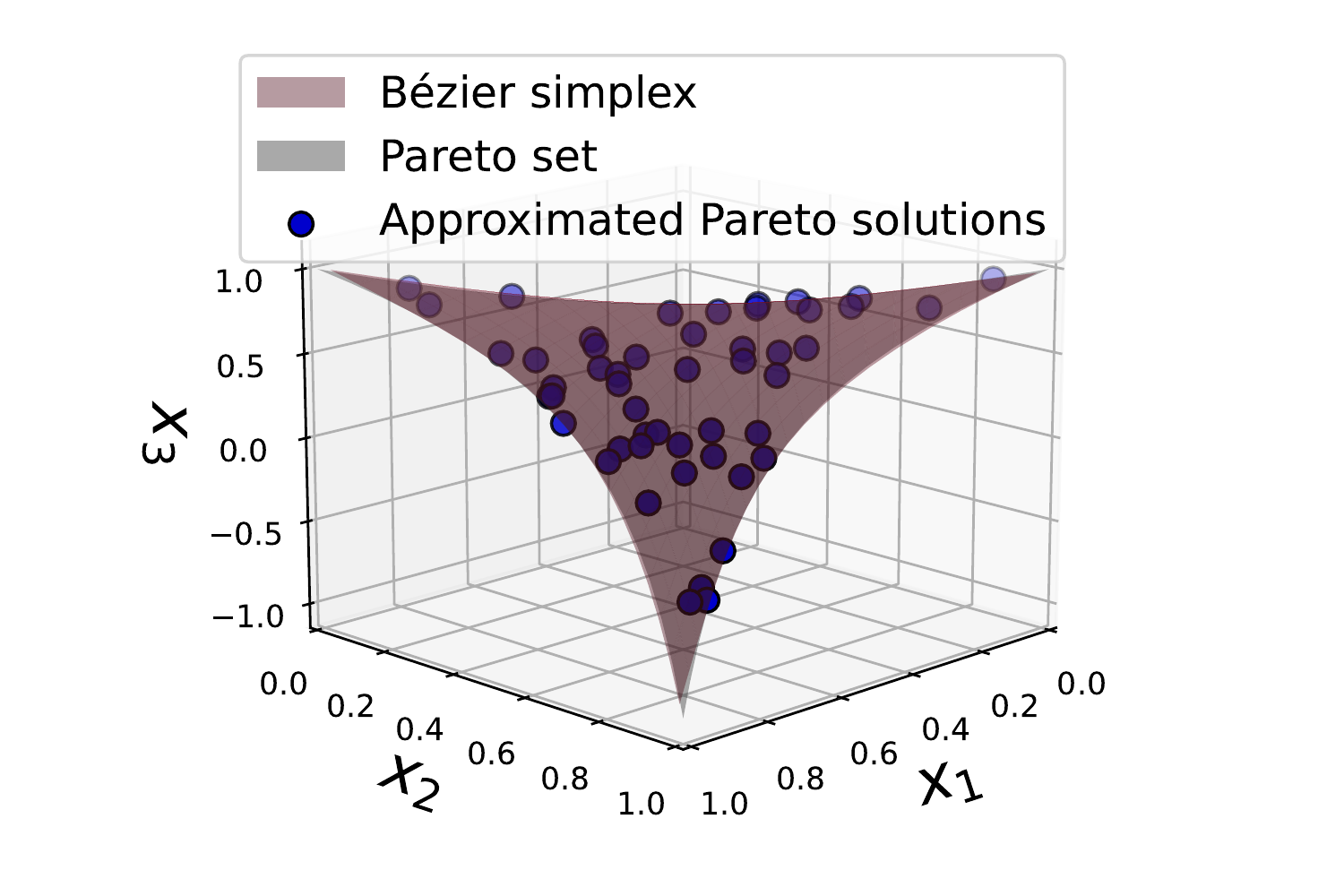}\label{fig:proposed_n50}}%
    \subfigure[$N=100$]{\includegraphics[ width=0.3\linewidth]{./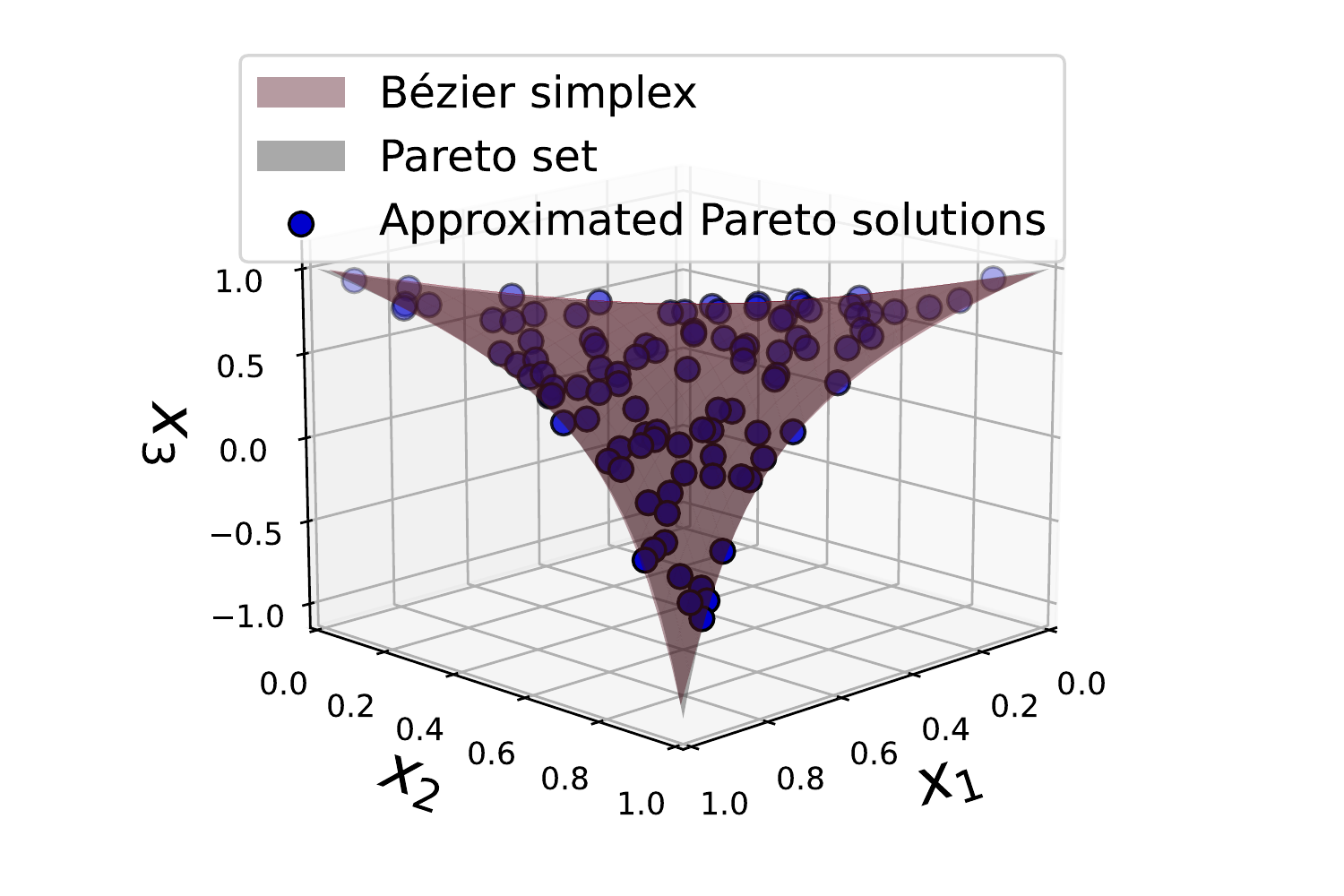}\label{fig:proposed_n100}}
    \caption{Results for \Cref{alg:proposed} with the sample size of $30,50$, and $100$.}
    \label{fig:proposed_bezier}
    \subfigure[$p=30$]{\includegraphics[width=0.3\linewidth]{./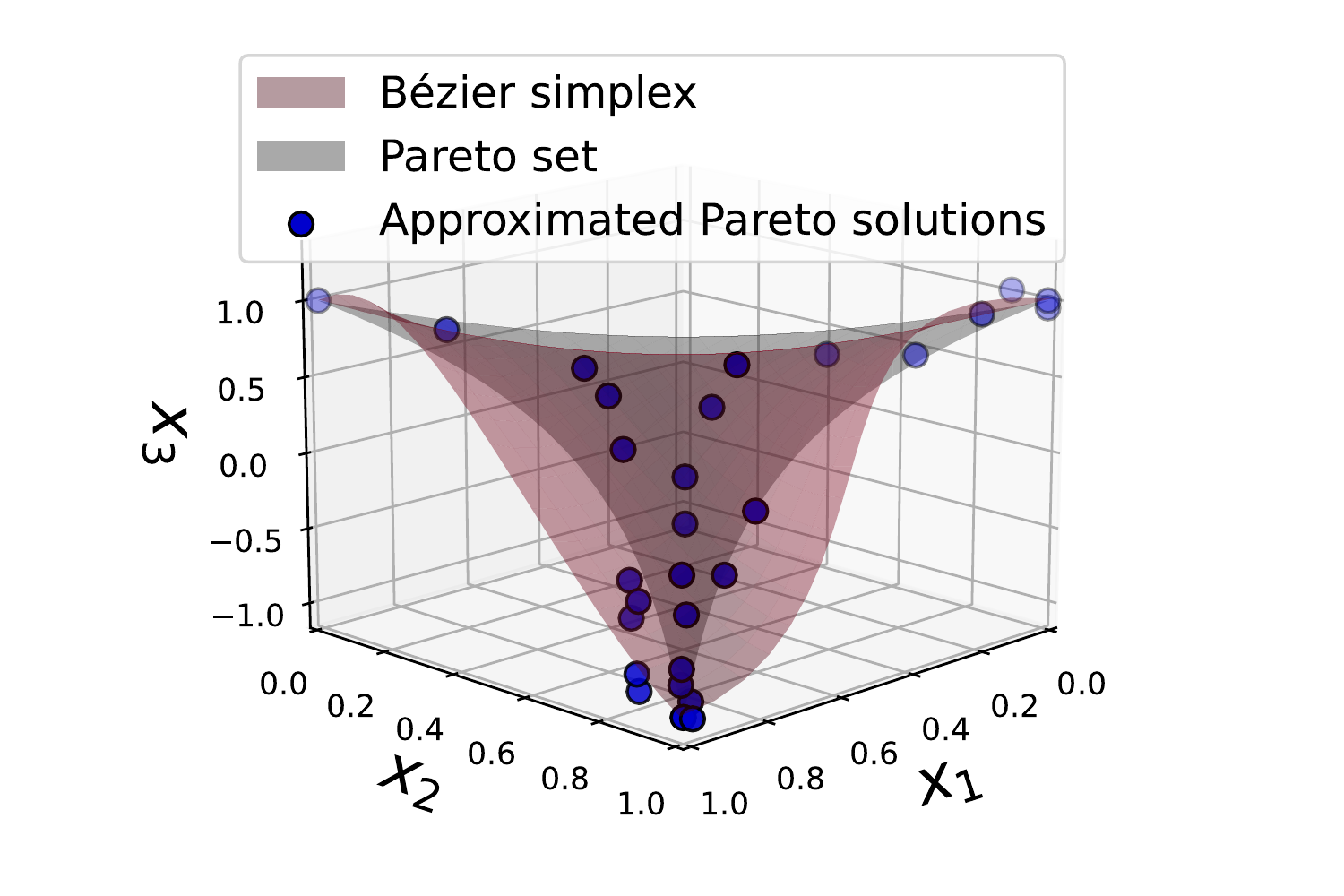}\label{fig:jmetal_n30}}%
    \subfigure[$p=50$]{\includegraphics[ width=0.3\linewidth]{./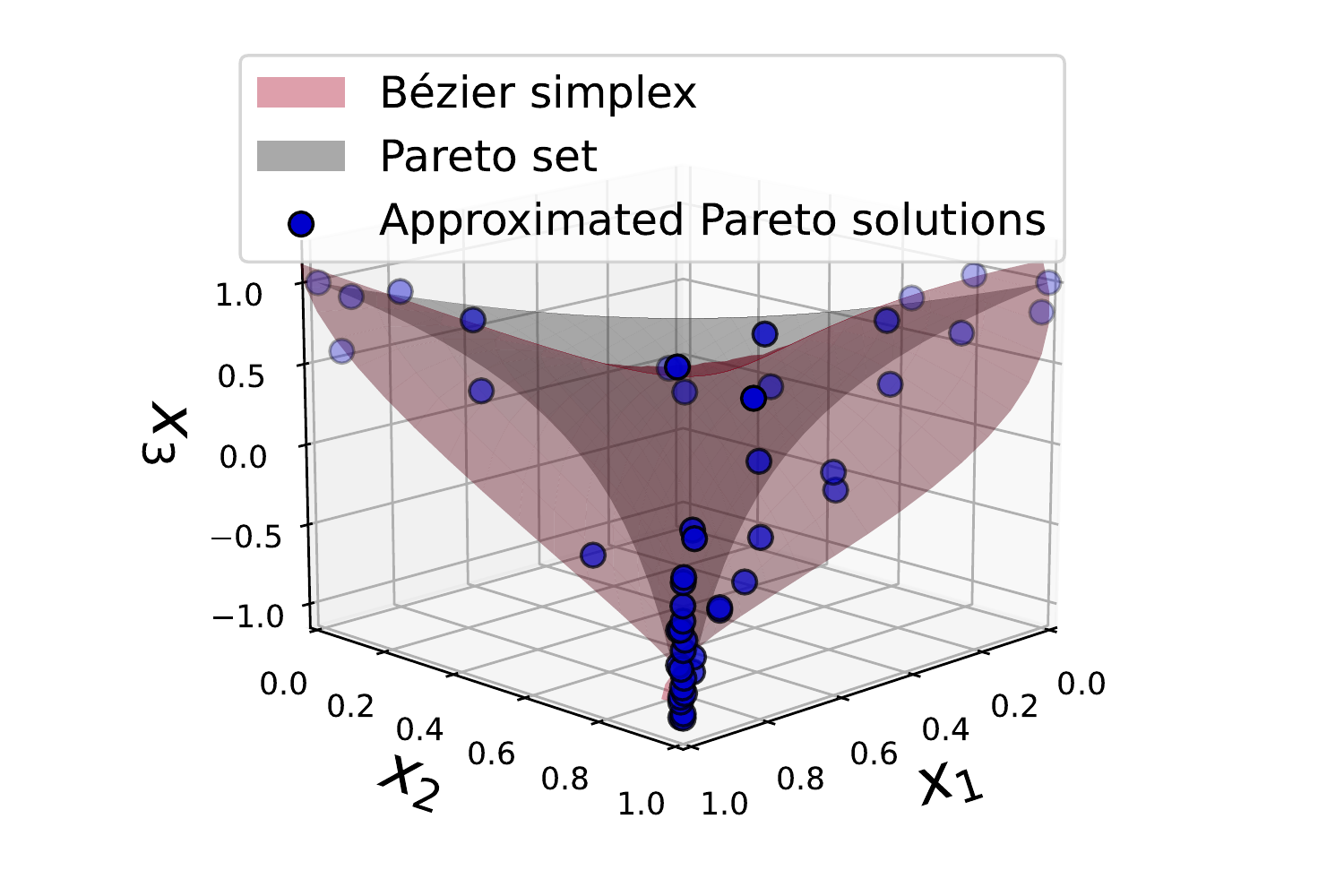}\label{fig:jmetal_n50}}
    \subfigure[$p=100$]{\includegraphics[ width=0.3\linewidth]{./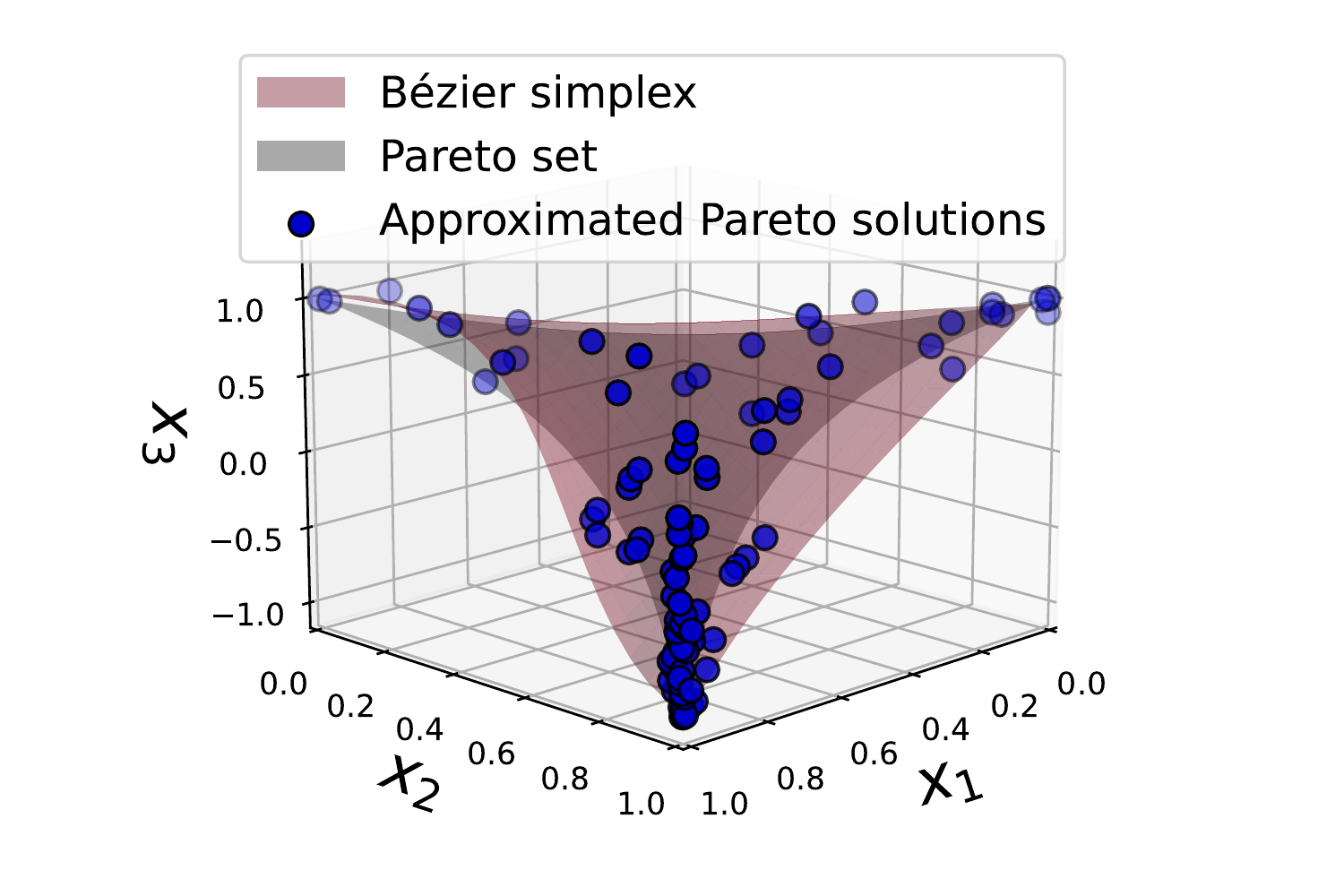}\label{fig:jmetal_n100}}
    \caption{Results for NSGA-II and the all-at-once with the population size of $30,50$, and $100$.}
    \label{fig:existing_bezier}
\end{figure*}

\section{Conclusion}
In this paper, we have devised a general strategy to construct a multi-objective optimization algorithm from a single objective method with the B\'ezier simplex.
Also, we have defined the PAC stability of optimization algorithms and proved that this stability gives us an upper bound on the generalization gap in the sense of PAC learning.
Our theoretical analysis showed that if we construct a multi-objective optimization algorithm from a gradient-based single-objective optimization algorithm, the resultant algorithm is PAC stable.
In our numerical experiments, we have demonstrated the multi-objective optimization algorithm on the basis of our scheme gives better generalization gaps and approximation accuracies of the Pareto set than the existing algorithm for simplicial problems. 
As a concluding remark, we have to note that this study is limited to treat simplicial problems.
It would be interesting for future studies to extend this study to non-simplicial cases.

\bibliographystyle{plainnat}
\bibliography{cite}

\newpage
\appendix

\section{Proof of \Cref{th:general}}
\begin{proof}
We denote two independent random samples by $S=\left(\bm{t}_{1}, \ldots, \bm{t}_{N}\right)$, $S^{\prime}=\left(\bm{t}_{1}^{\prime}, \ldots, \bm{t}_{N}^{\prime}\right)$. Let $S^{(i)}=\left(\bm{t}_{1}, \ldots, \bm{t}_{i-1}, \bm{t}_{i}^{\prime}, \bm{t}_{i+1}, \dots, \bm{t}_{N}\right)$ be the sample that is same as $S$ except in the $i$th example where we replace $t_{i}$ with $t_{i}^{\prime}$.
Since $D_{\varepsilon}=B_{\varepsilon}^{N+1}$, $\left(\bm{t}_{1}, \ldots, \bm{t}_{i}, \bm{t}_{i}^{\prime}, \bm{t}_{i+1}, \dots, \bm{t}_{N}\right) \in D_{\varepsilon}$ for any $i$ if and only if $S, S^{\prime}\in C_{\varepsilon}:=B_{\varepsilon}^N$.
In this case, we have
\begin{align*}
 \mathbb{E}_A\left[ \left| \ell(A(S);\bm{t}_i)-\ell(A(S^{(i)});\bm{t}_i) \right| \right] < \delta.
\end{align*}
Then adding the inequalities for $i$ and applying the triangle inequality, we obtain
\begin{align*}
\left| \mathbb{E}_A\left[  \frac{1}{N}\sum_{i=1}^N \ell(A(S);\bm{t}_i)-\frac{1}{N}\sum_{i=1}^N \ell(A(S^{(i)});\bm{t}_i) \right] \right| < \delta.
\end{align*}Let us denote the conditional probability distribution of $\mathcal{D}, \mathcal{D}^N$ under the condition $B_{\varepsilon}, C_{\varepsilon}$ by $\mathcal{B}_{\varepsilon}, \mathcal{C}_{\varepsilon}$ respectively. Then we have

\begin{align*}
\left| \mathbb{E}_{(S, S') \sim \mathcal{C}_{\varepsilon}^2} \mathbb{E}_A\left[  \frac{1}{N}\sum_{i=1}^N \ell(A(S);\bm{t}_i)-\frac{1}{N}\sum_{i=1}^N \ell(A(S^{(i)});\bm{t}_i) \right] \right| < \delta.
\end{align*}
Here, we have
\begin{align*}
 &\mathbb{E}_{(S, S') \sim \mathcal{C}_{\varepsilon}^2} \mathbb{E}_{A}\left[\frac{1}{N} \sum_{i=1}^{N} \ell \left(A(S) ; \bm{t}_{i}\right)\right]  = \mathbb{E}_{S\sim \mathcal{C}_{\varepsilon}} \mathbb{E}_{A}\left[\frac{1}{N} \sum_{i=1}^{N} \ell \left(A(S) ; \bm{t}_{i}\right)\right]  = \hat{\mathbb{E}}_{S} \mathbb{E}_{A}\left[R_{S}[A(S)]\right],
\end{align*}
and
\begin{align*}
 \mathbb{E}_{(S, S') \sim \mathcal{C}_{\varepsilon}^2} \mathbb{E}_{A}\left[\frac{1}{N} \sum_{i=1}^{N} \ell \left(A(S^{(i)}) ; \bm{t}_{i}\right)\right] &=  \mathbb{E}_{(S, S') \sim \mathcal{C}_{\varepsilon}^2}   \mathbb{E}_{A}\left[\frac{1}{N} \sum_{i=1}^{N} \ell \left(A(S) ; \bm{t}'_{i}\right)\right]  \\
&=\mathbb{E}_{S \sim \mathcal{C}_{\varepsilon}}  \mathbb{E}_{A}\mathbb{E}_{S' \sim \mathcal{C}_{\varepsilon}} \left[\frac{1}{N} \sum_{i=1}^{N} \ell \left(A(S) ; \bm{t}'_{i}\right)\right]\\
 &= \hat{\mathbb{E}}_{S} \mathbb{E}_{A}\left[\hat{R}[A(S)]\right],
\end{align*}
where $\hat{\mathbb{E}}_{S}$ is the conditional expectation value of $C_{\varepsilon}$ and $\hat{R}[A(S)]$ is the conditional generalization error of $C_{\varepsilon}$. Thus we obtain the inequality in the theorem.

Finally, we have
\begin{align*}
    \mathbb{P}\qty( C_\varepsilon) = \mathbb{P}\qty( B_\varepsilon)^N =\mathbb{P}\qty( D_\varepsilon)^{\frac{N}{N+1}} > \mathbb{P}\qty( D_\varepsilon) > 1-\varepsilon,
\end{align*}
which completes the proof.
\end{proof}
\section{Proofs of \Cref{lemma:sigma-bounded,lemma:growth}}\label{proof:lemma:lammin}
We first show the three lemmas in advance.
\begin{lemma}\label{lemma:lammin}
For all $\varepsilon\in (0,1)$, there exists $\eta > 0$ satisfying
\begin{align*}
    \mathbb{P}\qty( \min_i \lambda_{\min}\qty(\bm{Z}(\bm{T}_i)^\top\bm{Z}(\bm{T}_i)) > \eta) \geq 1 - \varepsilon,
\end{align*}
where $\lambda_{\min}(\bm{A})$ denotes the minimal eigenvalue of $\bm{A}$ and $\bm{T}_i=\{\bm{t}^{(i)}_n\}^N_{n=1}$ is drawn i.i.d. from the uniform distribution on $\Delta^{M-1}$ for $i\in[K+1]$.
\end{lemma}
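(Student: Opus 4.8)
The plan is to reduce the statement to a single-sample claim—that the smallest eigenvalue exceeds some threshold with high probability—and then lift it to all $K+1$ samples by a union bound. Write $X_i \coloneqq \lambda_{\min}\!\left(\bm{Z}(\bm{T}_i)^\top \bm{Z}(\bm{T}_i)\right)$. Each $\bm{Z}(\bm{T}_i)^\top \bm{Z}(\bm{T}_i)$ is symmetric positive semidefinite, and, as already noted in the paper (``$\bm{Z}^{(k)\top}\bm{Z}^{(k)}$ is regular with probability $1$''), it is in fact positive definite almost surely whenever $N \geq |\mathbb{N}^M_D|$. Hence each $X_i$ is a strictly positive random variable almost surely. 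Since the $\bm{T}_i$ are i.i.d.\ collections of $N$ uniform samples on $\Delta^{M-1}$, the $X_i$ are identically distributed, so it suffices to control a single representative $X_1$.

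The core step is to convert ``$X_1>0$ almost surely'' into a quantitative lower bound. Fix $\varepsilon \in (0,1)$. As $\eta \downarrow 0$, the events $\{X_1 > \eta\}$ increase to $\{X_1 > 0\}$, so by continuity of the probability measure from below, $\mathbb{P}(X_1 > \eta) \to \mathbb{P}(X_1 > 0) = 1$. Therefore there exists $\eta > 0$ such that $\mathbb{P}(X_1 > \eta) \geq 1 - \tfrac{\varepsilon}{K+1}$, and by identical distribution the same $\eta$ satisfies $\mathbb{P}(X_i > \eta) \geq 1 - \tfrac{\varepsilon}{K+1}$ for every $i \in [K+1]$.

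Finally I would apply the union bound to the complementary events:
\begin{align*}
    \mathbb{P}\!\left(\min_i X_i \leq \eta\right) \leq \sum_{i=1}^{K+1} \mathbb{P}(X_i \leq \eta) \leq (K+1)\cdot \frac{\varepsilon}{K+1} = \varepsilon,
\end{align*}
which yields $\mathbb{P}\!\left(\min_i X_i > \eta\right) \geq 1 - \varepsilon$, exactly the claim. (Alternatively, since the $\bm{T}_i$ are independent, one could instead pick $\eta$ with $\mathbb{P}(X_1 > \eta) \geq (1-\varepsilon)^{1/(K+1)}$ and multiply the per-sample probabilities, avoiding the union bound entirely.)

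The main obstacle is the almost-sure positive definiteness of $\bm{Z}^\top \bm{Z}$, i.e.\ that the Bernstein-type monomials $\{\binom{D}{\bm{d}}\bm{t}^{\bm{d}}\}_{\bm{d}\in\mathbb{N}^M_D}$ evaluated at $N \geq |\mathbb{N}^M_D|$ random points produce a full-rank design matrix $\bm{Z}$. This reduces to the linear independence of these basis functions on $\Delta^{M-1}$: the locus where a fixed nonzero linear combination vanishes is a proper algebraic subvariety, hence Lebesgue-null, so uniform samples avoid simultaneous vanishing with probability one. Since the paper already asserts this regularity, I would cite it directly; the remaining ingredient—the measure-theoretic continuity argument turning positivity into a uniform threshold—is routine, and the union bound over the $K+1$ iterations is immediate.
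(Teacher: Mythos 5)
Your proof is correct, and it rests on the same two pillars as the paper's own argument: almost-sure positive definiteness of $\bm{Z}(\bm{T}_i)^\top\bm{Z}(\bm{T}_i)$ (singularity confines the sample to the zero set of a polynomial, which is Lebesgue-null) and continuity of the probability measure to convert almost-sure positivity into a uniform threshold $\eta$. The difference lies in how the $K+1$ samples are handled. The paper never splits them: it treats $\min_i \lambda_{\min}\bigl(\bm{Z}(\bm{T}_i)^\top\bm{Z}(\bm{T}_i)\bigr)$ as a single random variable on the joint space, observes that the bad event $A_0$ lies inside the zero set of the single polynomial $\prod_i \det\bigl(\bm{Z}(\bm{T}_i)^\top\bm{Z}(\bm{T}_i)\bigr)$, and applies continuity of measure once, via $\mathbb{P}(A_\eta)\to\mathbb{P}(A_0)=0$ as $\eta\to 0$, so no union bound is needed. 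You instead prove the one-sample statement and lift it with a union bound at tolerance $\varepsilon/(K+1)$ per sample (or, as you note, by independence and a product of probabilities). Both are valid; the paper's version is marginally more compact, while yours isolates the single-sample claim, makes the dependence of $\eta$ on $K$ explicit, and would still work if the $\bm{T}_i$ were independent but not identically distributed. One caveat applies to both arguments, and you state it more explicitly than the paper does: the zero set of a polynomial is null only if the polynomial is not identically zero, which requires $N \geq |\mathbb{N}^M_D|$ together with linear independence of the Bernstein monomials on $\Delta^{M-1}$ (so that some configuration of points makes $\bm{Z}$ have full column rank); the paper's proof asserts $\mathbb{P}(A_0)=0$ without flagging this hypothesis, whereas you name it as the main obstacle and sketch the right justification.
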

\begin{proof}
Consider the set
\begin{align*}
    A_0 \coloneqq
    \Set*{\{\bm{T_i}\}_{i=1}^{K+1}\subseteq(\Delta^{M-1})^{K+1}}{\min_i \lambda_{\min}\qty(\bm{Z}(\bm{T_i})^\top\bm{Z}(\bm{T_i}))=0}.
\end{align*}
Since $\bm{Z}(\bm{T}_i)^\top\bm{Z}(\bm{T}_i)$ is a symmetric matrix, it has non-negative eigenvalues and $\bm{Z}(\bm{T}_i)^\top\bm{Z}(\bm{T}_i)$  has zero eigenvalue if and only if the determinant of $\bm{Z}(\bm{T}_i)^\top\bm{Z}(\bm{T}_i)$ is zero. Hence, $A_0$ is a subset of 
\begin{align*}
    \Set*{\{\bm{t}_n\}_{n=1}^{N}\subseteq\Delta^{M-1}}{\prod_i \det(\bm{Z}(\bm{T}_i)^\top\bm{Z}(\bm{T}_i) )=0}.
\end{align*}
Therefore, $A_0$ is equal to the zero set of a polynomial.  This implies  $\mathbb{P}(A_0)=0$.  Considering the set
\begin{align*}
    A_{\eta} \coloneqq
    \Set*{\{\bm{T}_i\}_{i=1}^{K+1}\subseteq(\Delta^{M-1})^{K+1}}{\min_i \lambda_{\min}\qty(\bm{Z}(\bm{T_i})^\top\bm{Z}(\bm{T_i}))\leq\eta},
\end{align*}
then we have
\begin{align*}
    \mathbb{P}(A_\eta) \to \mathbb{P}(A_0) = 0 \quad (\eta\to 0).
\end{align*}
This implies that for all $\varepsilon\in (0,1)$ there exists some $\eta > 0$ such that
\begin{align}
    \Pr(\Set*{\{\bm{T}_i\}_{i=1}^{K+1}\subseteq(\Delta^{M-1})^{K+1}}{\min_i \lambda_{\min}\qty(\bm{Z}(\bm{T}_i)^\top\bm{Z}(\bm{T}_i))\leq\eta}) < \varepsilon.
\end{align}
The proof is completed by taking the complementary event.
\end{proof}
\begin{lemma}\label{col:ZZ-bounded}
For all $\varepsilon\in (0,1)$, there exists $\zeta > 0$ satisfying
\begin{align}
    \mathbb{P}\qty(\max_{i}\left\|\bm{Z}(\bm{T}_i)^\top\bm{Z}(\bm{T}_i)\right\|_F < \zeta) \geq 1 - \varepsilon,
\end{align}
where $\bm{T}_i=\{\bm{t}^{(i)}_n\}_{n=1}^{N}$ is drawn i.i.d. from the uniform distribution on $\Delta^{M-1}$ for $i\in [K+1]$.
\end{lemma}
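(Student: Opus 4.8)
The plan is to observe that $\|\bm{Z}(\bm{T}_i)^\top\bm{Z}(\bm{T}_i)\|_F$ admits a \emph{deterministic} upper bound depending only on $N$ and the constant $U$ from \Cref{lemma:sigma-bounded}, so that the asserted inequality in fact holds with probability $1$, and hence with probability at least $1-\varepsilon$ for every $\varepsilon \in (0,1)$.

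First I would rewrite the Gram matrix as a sum of rank-one outer products. Since the rows of $\bm{Z}(\bm{T}_i)$ are the vectors $\bm{z}(\bm{t}_n^{(i)})^\top$, we have
\begin{align*}
    \bm{Z}(\bm{T}_i)^\top\bm{Z}(\bm{T}_i) = \sum_{n=1}^N \bm{z}(\bm{t}_n^{(i)})\,\bm{z}(\bm{t}_n^{(i)})^\top.
\end{align*}
Applying the triangle inequality for the Frobenius norm and the elementary identity $\|\bm{v}\bm{v}^\top\|_F = \|\bm{v}\|_2^2$, together with the defining property $\|\bm{z}(\bm{t})\|_2 \leq U$ for all $\bm{t} \in \Delta^{M-1}$, gives
\begin{align*}
    \|\bm{Z}(\bm{T}_i)^\top\bm{Z}(\bm{T}_i)\|_F \leq \sum_{n=1}^N \|\bm{z}(\bm{t}_n^{(i)})\|_2^2 \leq N U^2
\end{align*}
for every index $i$, regardless of the realization of the samples.

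Since this bound is uniform over $i \in [K+1]$ and holds surely, the maximum $\max_i \|\bm{Z}(\bm{T}_i)^\top\bm{Z}(\bm{T}_i)\|_F$ is also bounded by $N U^2$ with probability $1$. Choosing any $\zeta > N U^2$ then yields $\mathbb{P}(\max_i \|\bm{Z}(\bm{T}_i)^\top\bm{Z}(\bm{T}_i)\|_F < \zeta) = 1 \geq 1-\varepsilon$, as required.

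There is no genuine probabilistic obstacle here: the only ingredient is the finiteness of $U$, which is guaranteed because $\bm{z}$ is continuous and $\Delta^{M-1}$ is compact. In fact, by the multinomial theorem the entries of $\bm{z}(\bm{t})$ are nonnegative and sum to $1$, so one may even take $U = 1$ and obtain the explicit bound $N$. Thus the statement is deliberately phrased in the probabilistic form of \Cref{lemma:lammin} only for notational parallelism with the surrounding lemmas, even though a strictly stronger almost-sure bound is available; the subtlety in this group of results lies entirely in the lower eigenvalue bound of \Cref{lemma:lammin}, not in the upper bound treated here.
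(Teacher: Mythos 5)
Your computation is internally correct for the inequality as printed: writing $\bm{Z}(\bm{T}_i)^\top\bm{Z}(\bm{T}_i)=\sum_{n=1}^N \bm{z}(\bm{t}^{(i)}_n)\bm{z}(\bm{t}^{(i)}_n)^\top$ gives $\|\bm{Z}(\bm{T}_i)^\top\bm{Z}(\bm{T}_i)\|_F\le NU^2$ deterministically (and indeed $U=1$ works, since the entries of $\bm{z}(\bm{t})$ are nonnegative and sum to one on $\Delta^{M-1}$). However, comparing with the paper's own proof shows that the printed statement is a typo, and that you have therefore proved the wrong inequality. The paper's proof diagonalizes the Gram matrix, invokes \Cref{lemma:lammin} to get $\min_i\lambda_{\min}\left(\bm{Z}(\bm{T}_i)^\top\bm{Z}(\bm{T}_i)\right)>\eta$ with probability at least $1-\varepsilon$, and deduces
\[
\max_i\left\|\left(\bm{Z}(\bm{T}_i)^\top\bm{Z}(\bm{T}_i)\right)^{-1}\right\|_F
\le \frac{\sqrt{|\mathbb{N}^M_D|}}{\min_i\lambda_{\min}\left(\bm{Z}(\bm{T}_i)^\top\bm{Z}(\bm{T}_i)\right)}
< \frac{\sqrt{|\mathbb{N}^M_D|}}{\eta},
\]
taking $\zeta$ to be any number at least $\sqrt{|\mathbb{N}^M_D|}/\eta$; that is, the lemma is really about the \emph{inverse} Gram matrix. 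The downstream uses confirm this: in the proofs of \Cref{lemma:sigma-bounded,lemma:growth}, the constant supplied by this lemma multiplies $\|(\bm{Z}^\top\bm{Z})^{-1}\|_F$ in bounds such as $\|\varphi_{\bm{T}}(\bm{P})-\bm{P}\|_F\le\|(\bm{Z}^\top\bm{Z})^{-1}\|_F\,\|\bm{Z}^\top\bm{G}\|_F$.

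For the intended statement your strategy genuinely fails, and the failure is exactly the point of the lemma. An almost-sure upper bound on $\|\bm{Z}^\top\bm{Z}\|_F$ gives no control whatsoever on $\|(\bm{Z}^\top\bm{Z})^{-1}\|_F$: although $\bm{Z}^\top\bm{Z}$ is nonsingular with probability $1$, its smallest eigenvalue can be arbitrarily close to $0$ (the feature vectors $\bm{z}(\bm{t}_n)$ can fall arbitrarily close to a proper subspace of $\mathbb{R}^{|\mathbb{N}^M_D|}$), so no deterministic bound on the inverse exists and the probabilistic phrasing is essential rather than, as you suggest, mere notational parallelism. Your closing remark that the only subtlety lives in the eigenvalue lower bound of \Cref{lemma:lammin} is half right: this lemma is precisely the bridge that converts that eigenvalue lower bound into the Frobenius-norm bound on the inverse consumed by the later lemmas, via $\|\bm{\Lambda}_i^{-1}\|_F\le\sqrt{|\mathbb{N}^M_D|}/\lambda_{\min}$ together with unitary invariance of the Frobenius norm, and that conversion is its entire content.
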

\begin{proof}
Since $\bm{Z}(\bm{T}_i)^\top\bm{Z}(\bm{T}_i)$ is a symmetric matrix, there exists an orthogonal matrix $\bm{Q}_i$ such that $\bm{Z}(\bm{T}_i)^\top\bm{Z}(\bm{T}_i)=\bm{Q}_i\bm{\Lambda}_i\bm{Q}_i^\top$ where $\bm{\Lambda}_i$ is a diagonal matrix whose diagonal entry is the eigenvalue of $\bm{Z}(\bm{T}_i)^\top\bm{Z}(\bm{T}_i)$. According to Lemma \ref{lemma:lammin}, $\bm{Z}(\bm{T}_i)^\top\bm{Z}(\bm{T}_i)$ is a regular matrix with probability at least $1-\varepsilon$ for all $i\in [K+1]$. Hence, we have the following with probability at least $1-\varepsilon$:
\begin{align*}
    \max_{i}\left\|\qty(\bm{Z}(\bm{T}_i)^\top\bm{Z}(\bm{T}_i))^{-1}\right\|_F
    &= \max_{i}\|\bm{Q}_i^\top\bm{\Lambda}_i^{-1}\bm{Q}_i\|_F \\
    &= \max_{i}\|\bm{\Lambda}_i^{-1}\|_F \\
    &= \max_{i}\sqrt{\sum_{n=1}^{|\mathbb{N}^M_D|} \frac{1}{\lambda^2_n(\bm{Z}(\bm{T}_i)^\top\bm{Z}(\bm{T}_i))}} \\
    &\leq \sqrt{ \frac{|\mathbb{N}^M_D|}{\min_{i}\lambda^2_{\min}(\bm{Z}(\bm{T}_i)^\top\bm{Z}(\bm{T}_i))}} \\
    &< \frac{\sqrt{|\mathbb{N}^M_D|}}{\eta},
\end{align*}
where the second equality follows from the fact that the Frobenius norm is unitarily invariant, and the second inequality follows from \Cref{lemma:lammin}. The proof is completed by letting $\zeta$ be some real number greater than or equal to $\frac{\sqrt{|\mathbb{N}^M_D|}}{\eta}$.
\end{proof}
\begin{lemma}\label{lemma:ZG}
Let $U>0$ be a constant satisfying $\max_{\bm{t}\in\Delta^{M-1}}\|\bm{z}(\bm{t})\|_2 \leq U$.
Then, for any $\bm{T}\subseteq\Delta^{M-1}$, we have
\begin{align}
    \left\|\bm{Z}(\bm{T})^\top\bm{G}(\bm{T})\right\|_F \leq NU\mu.
\end{align}
\end{lemma}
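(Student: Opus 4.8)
The plan is to regard $\bm{Z}(\bm{T})^\top\bm{G}(\bm{T})$ as a sum of rank-one matrices and estimate each summand separately. Writing $\bm{z}_n \coloneqq \bm{z}(\bm{t}_n)$ for the $n$th column of $\bm{Z}(\bm{T})^\top$ and $\bm{g}_n \coloneqq J_{\bm{f}}(\bm{b}(\bm{t}_n|\bm{P}))^\top\bm{t}_n$ for the (transpose of the) $n$th row of $\bm{G}(\bm{T})$, the matrix product unfolds into the outer-product sum
\[
\bm{Z}(\bm{T})^\top\bm{G}(\bm{T}) = \sum_{n=1}^N \bm{z}_n\bm{g}_n^\top .
\]
First I would apply the triangle inequality for the Frobenius norm and then use the identity $\|\bm{z}_n\bm{g}_n^\top\|_F = \|\bm{z}_n\|_2\,\|\bm{g}_n\|_2$, which holds for any outer product, to obtain
\[
\left\|\bm{Z}(\bm{T})^\top\bm{G}(\bm{T})\right\|_F \le \sum_{n=1}^N \|\bm{z}_n\|_2\,\|\bm{g}_n\|_2 .
\]

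The factor $\|\bm{z}_n\|_2$ is bounded by $U$ directly from the defining property of $U$. The substantive part is controlling $\|\bm{g}_n\|_2$. Here I would expand $\bm{g}_n = J_{\bm{f}}(\bm{b}(\bm{t}_n|\bm{P}))^\top\bm{t}_n = \sum_{m=1}^M (t_n)_m\,\nabla f_m(\bm{b}(\bm{t}_n|\bm{P}))$ and observe that, because $\bm{t}_n \in \Delta^{M-1}$ has nonnegative entries summing to one, this is a \emph{convex combination} of the gradients $\nabla f_m$. Under \Cref{assumption:mu} each $f_m$ is $\mu$-Lipschitz, hence $\|\nabla f_m\|_2 \le \mu$ pointwise; the triangle inequality together with $\sum_{m=1}^M (t_n)_m = 1$ then gives $\|\bm{g}_n\|_2 \le \sum_{m=1}^M (t_n)_m\,\mu = \mu$. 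Combining the two factor bounds yields $\sum_{n=1}^N U\mu = NU\mu$, which is exactly the claimed inequality.

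The only step that warrants a word of justification is the passage from $\mu$-Lipschitz continuity of each $f_m$ to the pointwise gradient bound $\|\nabla f_m\|_2 \le \mu$; this is the standard equivalence for differentiable functions, and once invoked the rest of the argument is a routine chain of norm inequalities. I therefore do not anticipate any genuine obstacle: the whole content of the lemma is the elementary but essential observation that each weight vector $\bm{t}_n$ lies in the probability simplex, so the scalarized gradient inherits the uniform Lipschitz bound $\mu$ without any dependence on $\bm{P}$ or on the sample size.
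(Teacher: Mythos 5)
Your proposal is correct and follows essentially the same route as the paper's proof: decompose $\bm{Z}(\bm{T})^\top\bm{G}(\bm{T})$ into the sum of outer products $\sum_{n=1}^N \bm{z}_n\bm{t}_n^\top J_{\bm{f}}(\bm{P}^\top\bm{z}_n)$, apply the triangle inequality and the rank-one Frobenius-norm factorization, and bound the scalarized gradient by $\mu$ using the fact that $\bm{t}_n\in\Delta^{M-1}$ makes it a convex combination of $\mu$-Lipschitz gradients. Your explicit remark that differentiability plus $\mu$-Lipschitz continuity yields $\|\nabla f_m\|_2\le\mu$ pointwise is a welcome clarification of a step the paper leaves implicit.
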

\begin{proof}
First, we show that $\|\bm{z}(\bm{t})\|_2$ is bounded above for any $\bm{t}\in\Delta^{M-1}$. Since $\bm{z}$ is a continuous function over $\bm{t}$ whose domain $\Delta^{M-1}$ is compact, there exists upper bound $U>0$ for any $\bm{t}\in\Delta^{M-1}$. 
Next, we show that $\|\bm{Z}(\bm{T})^\top\bm{G}(\bm{T})\|_F$ is bounded above.
For any $\bm{T}\coloneqq\qty{\bm{t}_n}^{N}_{n=1}\subseteq\Delta^{M-1}$, we have
\begin{align*}
    \bm{Z}(\bm{T})^\top\bm{G}(\bm{T})
    = (\bm{z}_1,\dots,\bm{z}_N)
    \begin{bmatrix}
   \bm{t}_1^\top J_{\bm{f}}(\bm{P}^\top \bm{z}_1)\\
   \vdots\\
   \bm{t}_N^\top J_{\bm{f}}(\bm{P}^\top \bm{z}_N)\\
   \end{bmatrix} 
   = \sum_{n=1}^{N} \bm{z}_n \bm{t}_n^\top J_{\bm{f}}(\bm{P}^\top\bm{z}_n).
\end{align*}
Therefore,
\begin{align*}
    \left\|\bm{Z}(\bm{T})^\top\bm{G}(\bm{T})\right\|_F 
    &= \left\|\sum_{n=1}^{N} \bm{z}_n \bm{t}_n^\top J_{\bm{f}}(\bm{P}^\top\bm{z}_n)\right\|_F \\
    &\leq \sum_{n=1}^{N} \left\|\bm{z}_n \bm{t}_n^\top J_{\bm{f}}(\bm{P}^\top\bm{z}_n)\right\|_F \\
    &\leq \sum_{n=1}^{N} \left\|\bm{z}_n \right\|_2 \left\| \bm{t}_n^\top J_{\bm{f}}(\bm{P}^\top\bm{z}_n)\right\|_2 \\
    &=\sum_{n=1}^{N} \left\|\bm{z}_n \right\|_2 \left\| \sum_{m=1}^{M}t_{nm}\nabla f_m (\bm{P}^\top\bm{z})\right\|_2 \\
    &\leq \sum_{n=1}^{N} U\mu = NU\mu.
\end{align*}
The last inequality holds by the fact that the term $\sum_{m=1}^{M}t_{nm}\nabla f_m (\bm{P}^\top\bm{z})$ is a convex combination of $\nabla f_1 (\bm{P}^\top\bm{z}),\dots,\nabla f_M (\bm{P}^\top\bm{z})$ and the assumption that every function $f_1,\dots,f_M$ is $\mu$-Lipschitz continuous.
\end{proof}

Finally, we show \Cref{lemma:sigma-bounded,lemma:growth}.
\begin{proof}[Proof of \Cref{lemma:sigma-bounded}]
By \eqref{eq:update_p}, we have
\begin{align*}
    \left\| \varphi_{\bm{T}}(\bm{P}) - \bm{P} \right\|_F 
    &= \alpha^{(k)} \left\| \qty(\bm{Z}^\top\bm{Z})^{-1}\bm{Z}^\top\bm{G} \right\|_F \\ 
    &\leq  \left\| \qty(\bm{Z}^\top\bm{Z})^{-1} \right\|_F \left\|\bm{Z}^\top\bm{G} \right\|_F.
\end{align*}
Let $\eta>0$ be a constant as in \Cref{col:ZZ-bounded}.
From \Cref{col:ZZ-bounded,lemma:ZG}, we have the following with probability at least $1-\varepsilon$:
\begin{align*}
    \left\| \varphi_{\bm{t}}(\bm{P}) - \bm{P} \right\|_F \leq \eta NU\mu.
\end{align*}
\end{proof}
\begin{proof}[Proof of \Cref{lemma:growth}] 
Let $\bm{T}$, $\bm{T}'$ and $\tilde{\bm{T}}$ be
\begin{align*}
    \bm{T} &= \qty{\bm{t}_1,\dots,\bm{t}_{N-1},\bm{t}_N},\\
    \bm{T}' &= \qty{\bm{t}_1,\dots,\bm{t}_{N-1},\bm{t}'_N},\\
    \tilde{\bm{T}} &= \qty{\bm{t}_1,\dots,\bm{t}_{N-1},\bm{t}_N,\bm{t}'_N}.
\end{align*}
Let $\tilde{\bm{Z}}$ be a matrix constructed by $\tilde{\bm{T}}$. By Sherman-Morrison formula, we have
\begin{align*}
    (\tilde{\bm{Z}}^\top\tilde{\bm{Z}})^{-1}
    &= (\bm{Z}^\top\bm{Z} + \bm{z}_{N+1}\bm{z}_{N+1}^\top)^{-1} \\
    &= (\bm{Z}^\top\bm{Z})^{-1} + \frac{(\bm{Z}^\top\bm{Z})^{-1}\bm{z}_{N+1}\bm{z}_{N+1}^\top(\bm{Z}^\top\bm{Z})^{-1}}{1+\bm{z}_{N+1}^\top(\bm{Z}^\top\bm{Z})^{-1}\bm{z}_{N+1}}.
\end{align*}
Let $\varphi_{\bm{T}}(\bm{P})$ be the control points obtained by \Cref{alg:proposed} with $\bm{T}$. Then, we have
\begin{align*}
    \varphi_{\tilde{\bm{T}}}(\bm{P}) - \varphi_{\bm{T}}(\bm{P}) &= (\tilde{\bm{Z}}^\top\tilde{\bm{Z}})^{-1}\tilde{\bm{Z}}^\top\tilde{\bm{G}} - (\bm{Z}^\top\bm{Z})^{-1} \bm{Z}^\top\bm{G} \\
    &= (\bm{Z}^\top\bm{Z})^{-1}(\tilde{\bm{Z}}^\top\tilde{\bm{G}}-\bm{Z}^\top\bm{G}) + \frac{(\bm{Z}^\top\bm{Z})^{-1}\bm{z}_{N+1}\bm{z}_{N+1}^\top(\bm{Z}^\top\bm{Z})^{-1}\tilde{\bm{Z}}^\top\tilde{\bm{G}}}{1+\bm{z}_{N+1}^\top(\bm{Z}^\top\bm{Z})^{-1}\bm{z}_{N+1}} \\
    &= (\bm{Z}^\top\bm{Z})^{-1}(\bm{z}_{N+1}\bm{t}_{N+1}^\top J_{\bm{f}}(\bm{P}^\top\bm{z}_{N+1})) + \frac{(\bm{Z}^\top\bm{Z})^{-1}\bm{z}_{N+1}\bm{z}_{N+1}^\top(\bm{Z}^\top\bm{Z})^{-1}\tilde{\bm{Z}}^\top\tilde{\bm{G}}}{1+\bm{z}_{N+1}^\top(\bm{Z}^\top\bm{Z})^{-1}\bm{z}_{N+1}}.
\end{align*}
Considering the norm on both sides, we have
\begin{align*}
    \|\varphi_{\tilde{\bm{T}}}(\bm{P}) - \varphi_{\bm{T}}(\bm{P})\|_F 
    &\leq \|(\bm{Z}^\top\bm{Z})^{-1}\|_F\cdot \|\bm{z}_{N+1}\|_2\cdot\|\bm{t}_{N+1}^\top J_{\bm{f}}(\bm{P}^\top\bm{z}_{N+1})\|_2 \\
    &\quad+ \|\tilde{\bm{Z}}^\top\tilde{\bm{G}}\|_F\cdot\left\|\frac{(\bm{Z}^\top\bm{Z})^{-1}\bm{z}_{N+1}\bm{z}_{N+1}^\top(\bm{Z}^\top\bm{Z})^{-1}}{1+\bm{z}_{N+1}^\top(\bm{Z}^\top\bm{Z})^{-1}\bm{z}_{N+1}}\right\|_F
\end{align*}
In the following, for the sake of simplicity, let $\bm{A}=\bm{Z}^\top\bm{Z},\bm{b}=\bm{z}_{N+1}$ and $\bm{y}=\bm{A}^{-1}\bm{b}$. Then, we have the following inequality with probability at least $1-\varepsilon$:
\begin{align*}
    \left\|\frac{(\bm{Z}^\top\bm{Z})^{-1}\bm{z}_{N+1}\bm{z}_{N+1}^\top(\bm{Z}^\top\bm{Z})^{-1}}{1+\bm{z}_{N+1}^\top(\bm{Z}^\top\bm{Z})^{-1}\bm{z}_{N+1}}\right\|_F
    &= \left\|\frac{\bm{A}^{-1}\bm{b}\bm{b}^\top\bm{A}^{-1}}{1+\bm{b}^\top\bm{A}^{-1}\bm{b}}\right\|_F \\
    &= \left\|\frac{(\bm{b}^\top\bm{A}^{-1})^\top (\bm{b}^\top\bm{A}^{-1})}{1+\bm{b}^\top\bm{A}^{-1}(\bm{A}\bm{A}^{-1})\bm{b}}\right\|_F \\
    &= \left\| \frac{\bm{y}\bm{y}^\top}{1+\bm{y}^\top\bm{Ay}} \right\|_F\\
    &= \frac{\|\bm{y}\|^2_2}{|1+\bm{y}^\top\bm{Ay}|} \\
    &\leq \frac{\|\bm{y}\|^2_2}{\bm{y}^\top\bm{Ay}} \\
    &\leq \frac{1}{\lambda_{\min}(\bm{A})} < \zeta,
\end{align*}
where $\zeta$ is a constant as in \Cref{lemma:lammin}. 
The first inequality holds since $\bm{A}\coloneqq\bm{Z}^\top\bm{Z}$ is a positive semidefinite matrix with probability $1-\varepsilon$ by \Cref{lemma:lammin} and the second inequality follows from the property of Rayleigh quotient. The last inequality directly follows from \Cref{lemma:lammin}.
Hence, we have the following inequalities with probability at least $1-\varepsilon$:
\begin{align*}
    \|\varphi_{\tilde{\bm{T}}}(\bm{P}) - \varphi_{\bm{T}}(\bm{P})\|_F 
    \leq \mu U \qty(\eta + \zeta N),
\end{align*}
and
\begin{align*}
    \|\varphi_{\tilde{\bm{T}}}(\bm{P}) - \varphi_{\bm{T}'}(\bm{P})\|_F \leq \mu U \qty(\eta + \zeta N).
\end{align*}
Therefore, we have the following with probability at least $1-\varepsilon$:
\begin{align*}
    \left\|\varphi_{\bm{T}}(\bm{P}) - \varphi_{\bm{T}'}(\bm{P})\right\|_F &= \left\|\varphi_{\bm{T}}(\bm{P}) - \varphi_{\tilde{\bm{T}}}(\bm{P}) + \varphi_{\tilde{\bm{T}}}(\bm{P}) - \varphi_{\bm{T}'}(\bm{P})\right\|_F \\
    &\leq \left\|\varphi_{\tilde{\bm{T}}}(\bm{P}) - \varphi_{\bm{T}}(\bm{P}) \right\|_F + \left\|\varphi_{\tilde{\bm{T}}}(\bm{P}) - \varphi_{\bm{T}'}(\bm{P}) \right\|_F \\
    &\leq 2\mu U \qty(\eta + \zeta N).
\end{align*}
\end{proof}
\section{Proof of \Cref{lemma:P-gap}}\label{proof:lemma:p-gap}
\begin{proof}
Let $\delta^{(i)}\coloneqq\|\bm{P}^{(i)}-\bm{P}'^{(i)}\|_F$. We have $\delta^{(i)}=0$ for $i=1,\dots,k$. From Lemma \ref{lemma:sigma-bounded}, we have the following with probability at least $1-\varepsilon$:
\begin{align*}
    \delta^{(i+1)}
    &= \left\| \bm{P}^{(i+1)}-\bm{P}'^{(i+1)} \right\|_F \\
    &= \left\| \bm{P}^{(i+1)} -\bm{P}^{(i)} + \bm{P}^{(i)} - \bm{P}'^{(i)}+\bm{P}'^{(i)}-\bm{P}'^{(i+1)} \right\|_F \\
    &\leq \left\| \bm{P}^{(i+1)} -\bm{P}^{(i)} \right\|_F + \left\| \bm{P}'^{(i+1)} -\bm{P}'^{(i)} \right\|_F + \left\| \bm{P}^{(i)} -\bm{P}'^{(i)} \right\|_F \\
    &\leq 2\eta NU\mu + \delta^{(i)},
\end{align*}
for each $i=k,\dots,K$. Therefore, by using the above relation repeatedly and from Lemma \ref{lemma:growth}, we have the following with probability at least $1-\varepsilon$:
\begin{align*}
    \delta^{(K+1)} &\leq 2(K-k)\eta NU\mu + 2\mu U \qty(\eta + \zeta N )\\
    &= 2 \mu\eta U \qty{1 + \qty(K-k + \frac{\zeta}{\eta})N}.
\end{align*}
\end{proof}
\section{Proof of \Cref{prop:stability}}\label{appendix:proof_stability}
\begin{proof}
For any $\bm{t}\in\Delta^{M-1}$ and for any $\{\bm{T}_i\}^{K}_{i=1},\,\{\bm{T}'_i\}^{K}_{i=1}\subseteq (\Delta^{M-1})^{K}$ such that $\{\bm{T}_i\}^{K}_{i=1}$ and $\{\bm{T}'_i\}^{K}_{i=1} $ differs only one example, we have
\begin{align}
\begin{split}\label{eq:PPbound}
    |\ell(A(\bm{T});\bm{t}) - \ell(A(\bm{T}');\bm{t})|
    &=\left| \| \bm{b}(\bm{t}|\bm{P}^{(K+1)})-\bm{x}^\star(\bm{t})\|_2 - \| \bm{b}(\bm{t}|\bm{P}'^{(K+1)})-\bm{x}^\star(\bm{t}) \|_2 \right|\\
    &\leq \left\|\bm{b}(\bm{t}|\bm{P}^{(K+1)})-\bm{b}(\bm{t}|\bm{P}'^{(K+1)}) \right\|_2 \\
    &= \left\| \left(\bm{P}^{(K+1)}-\bm{P}'^{(K+1)}\right)^\top\bm{z}(\bm{t}) \right\|_2 \\
    &\leq \|\bm{z}(\bm{t})\|_2 \cdot \left\|\bm{P}^{(K+1)}-\bm{P}'^{(K+1)}\right\|_F,
\end{split}
\end{align}
where the first inequality follows from the reverse triangle inequality.
We can bound the right-hand side of \eqref{eq:PPbound} with probability at least $1-\varepsilon$ by \Cref{lemma:P-gap}. Since the left-hand side of \eqref{eq:PPbound} is bounded for all $\bm{t}\in\Delta^{M-1}$, we see that \Cref{alg:proposed} satisfies PAC uniform stability.
\end{proof}
\section{Problem Definition}\label{appendix:problem}
\paragraph{Scaled-MED} is a three-variable three-objective problem defined by:
\begin{align*}
    \mathrm{minimize}\quad &\bm{f}(\bm{x})\coloneqq(f_1(\bm{x}),f_2(\bm{x}),f_3(\bm{x}))^\top \\
    \mathrm{subject\:to}\quad &\bm{x}\in \mathbb{R}^3 \\
    \text{where}\quad
    &f_1(\bm{x}) = x^2_1 + 3(x_2-1)^2 + 2(x_3-1)^2, \\
    &f_2(\bm{x}) = 2(x_1-1)^2 + x_2^2 + 3(x_3-1)^2, \\
    &f_3(\bm{x}) = 3(x_1-1)^2 + 2(x_2-1)^2 + (x_3+1)^2. 
\end{align*}
\paragraph{Skew-$M$MED} is a $M$-variable $M$-objective problem defined by:
\begin{align*}
    \mathrm{minimize}\quad &\bm{f}(\bm{x})\coloneqq(f_1(\bm{x}),\dots,f_M(\bm{x}))^\top \\
    \mathrm{subject\:to}\quad &\bm{x}\in \mathbb{R}^M \\
    \text{where}\quad
    &f_m(\bm{x}) = \left( \frac{1}{\sqrt{2}} \|\bm{x}-\bm{e}_m\|^2 \right)^{p_m},\\
    & p_m = \exp(\frac{2(m-1)}{M-1} - 1), \\
    &\bm{e}_m = (0,\dots,0,\underbrace{1}_{\text{$m$th}},0,\dots,0)^\top,\\
    \text{for}\quad &m=1,\dots,M.
\end{align*}
\paragraph{Skew-$M$MMD} is a $M$-variable $M$-objective problem defined by:
\begin{align*}
    \mathrm{minimize}\quad &\bm{f}(\bm{x})\coloneqq(f_1(\bm{x}),\dots,f_M(\bm{x}))^\top \\
    \mathrm{subject\:to}\quad &\bm{x}\in X \subseteq \mathbb{R}^M \\
    \text{where}\quad
    &f_m(\bm{x}) = \|\bm{A}_m (\bm{x}-\bm{c}_m )\|^{p_m},\\
    & p_m > 0, \\
    \text{for}\quad &m=1,\dots,M.
\end{align*}
In the experiments in \Cref{subsec:gd}, we set $M=3$, $X=\mathbb{R}^3$, 
\begin{align*}
    A_1 \coloneqq \diag{\qty(\frac{3}{5},\frac{4}{5},\frac{4}{5})},\;
    A_2 \coloneqq \diag{\qty(\frac{4}{5},\frac{3}{5},\frac{4}{5})},\;
    A_3 \coloneqq \diag{\qty(\frac{4}{5},\frac{4}{5},\frac{3}{5})},
\end{align*}
$\bm{c}_m\coloneqq\bm{e}_m$ and $p_m\coloneqq\exp(\frac{2(m-1)}{M-1}-1)$. Note that $\diag{\qty(\cdot)}$ denotes the diagonal matrix.
\section{Analytical solution of scaled-MED}\label{appendix:scaledmed_opt}
We derive a map $\bm{x}^\star\colon\Delta^2\to X^{\star}(\bm{f})$ for scaled-MED. For any $\bm{t}=(t_1,t_2,t_3)\in\Delta^2$, the scalarizing function weighted by $\bm{t}$ is defined by
\begin{align*}
    f(\bm{x}|\bm{t}) &\coloneqq \sum_{m=1}^3 t_m f_m(\bm{x}) \\
    &= t_1x^2_1 + 2t_2(x_1-1)^2 + 3t_3(x_1-1)^2 \\
    &\quad + 3t_1(x_2-1)^2 + t_2x_2^2 + 2t_3(x_2-1)^2 \\
    &\quad + 2t_1(x_3-1)^2 + 3t_2(x_3-1)^2 + t_3(x_3+1)^2.
\end{align*}
Since $f(\bm{x}|\bm{t})$ is a convex quadratic function with respect to each $x_1,\,x_2$ and $x_3$, its optimal solution $\qty(x^\star_1(\bm{t}),x^\star_2(\bm{t}),x^\star_3(\bm{t}))^\top$ satisfies the following conditions:
\begin{align*}
    \eval{\pdv{f(\bm{x}|\bm{t})}{x_1}}_{\bm x=\bm x^\star(\bm{t})} &= 2t_1x_1 + 4t_2(x_1-1) + 6t_3(x_1-1) = 0, \\
    \eval{\pdv{f(\bm{x}|\bm{t})}{x_2}}_{\bm{x}=\bm{x}^\star(\bm{t})} &= 6t_1(x_2-1) + 2t_2x_2 + 4t_3(x_2-1) = 0, \\
    \eval{\pdv{f(\bm{x}|\bm{t})}{x_3}}_{\bm{x}=\bm{x}^\star(\bm{t})} &= 4t_1(x_3-1) + 6t_2(x_3-1) + 2t_3(x_3+1) = 0.
\end{align*}
By solving the above equation, the map $\bm{x}^\star(\bm{t})$ is given by
\begin{align*}
    \bm{x}^\star(\bm{t}) = \qty(x^\star_1(\bm{t}),x^\star_2(\bm{t}),x^\star_3(\bm{t}))^\top
    &= \qty(\frac{2t_2 + 3t_3}{t_1 + 2t_2 + 3t_3},\,\frac{3t_1 + 2t_3}{3t_1 + t_2 + 2t_3},\,\frac{2t_1 + 3t_2 - t_3}{2t_1 + 3t_2 + t_3})^\top.
\end{align*}

\end{document}